 \newtheorem{thm}{Theorem}[section]
 \newtheorem{cor}{Corollary}[section]
 \theoremstyle{definition}
 \newtheorem{rem}{Remark}[section]
 \numberwithin{equation}{section}
\def\f{\frac}
\def\pa{\partial}
\def\pd #1#2{\f{\pa #1}{\pa #2}}
\def\e{\eqref}
\def\la{\lambda}
\def\vph{\varphi}
\def\wt #1{\widetilde{#1}}
\def\i1n{(i=1,\cdots,n)}
\def\j1n{(j=1,\cdots,n)}
\def\ij1n{(i,j=1,\cdots,n)}
\newcommand{\be}{\begin{equation}}
\newcommand{\ee}{\end{equation}}
\begin{document}

%-----------------------------------------------------------------------------------------------------------title
\begin{center}

{\LARGE\bf Exact boundary controllability and observability \\[2mm] for
first order quasilinear hyperbolic systems with a \\[5mm] kind of nonlocal boundary conditions}\\[5mm]

{\large Tatsien Li \footnote{School of Mathematical Sciences, Fudan
University, Shanghai 200433, China. Shanghai Key Laboratory for
Contemporary Applied Mathematics. \ Email: {\tt dqli@fudan.edu.cn}.
Tatsien Li was partially supported by the National Basic Research
Program of China (973 Program, Grant No. 2007CB814800).}
\quad Bopeng Rao\footnote{Institut de Recherche Math\'{e}matique
Avanc\'{e}e, Universit\'{e}  de Strasbourg, 67084 Strasbourg,
France. \  Email: {\tt rao@math.u-strasbg.fr}.}
\quad Zhiqiang Wang\footnote{School of Mathematical Sciences, Fudan
University, Shanghai 200433, China. Universit\'{e} Pierre et Marie
Curie-Paris 6, UMR 7598 Laboratoire Jacques-Louis Lions, 75005
Paris, France. E-mail: {\tt wzq@fudan.edu.cn}. Zhiqiang Wang was
supported by the Natural Science Foundation of China (Grant No.
10701028) and Fondation Sciences Math\'{e}matiques de Paris.}}

\end{center}

\vspace*{5mm}
%\maketitle

\begin{abstract}
In this paper we  establish the theory on semiglobal classical
solution to first order quasilinear hyperbolic systems with a kind
of nonlocal boundary conditions, and based on this, the
corresponding exact boundary controllability and observability are
obtained by a constructive method. Moreover, with the linearized
Saint-Venant system and the 1-D linear wave equation as examples, we
show that the number of both boundary controls and boundary
observations can not be reduced, and consequently, we conclude that
the exact boundary controllability for a hyperbolic system in a
network with loop can not be realized generically.

\end{abstract}

{\bf Key words:}\quad Quasilinear hyperbolic system, nonlocal
boundary conditions, exact boundary controllability, exact boundary
observability.

{\bf 2000 MR Subject Classification:}\quad
35L50,
%35L50 (1973-now) Boundary value problems for hyperbolic systems of first-order PDE
93B05,
%93B05 (1973-now) Controllability
93B07
%93B07 Observability

\section{Introduction}%-------------------------------------------------------------------introduction

Consider the following first order quasilinear hyperbolic system
\be\label{sys-original} \pd ut+A(u)\pd ux=B(u), \ee
where $u=(u_1,\cdots,u_n)^T$ is the unknown vector function of
$(t,x)$, $A(u)$ is a $n\times n$ matrix with suitably smooth entries
$a_{ij}(u)\ \ij1n$, $B(u)=(b_1(u),\cdots,b_n(u))^T$ is a suitably
smooth vector function with
\be\label{1.2} B(0)=0. \ee

By hyperbolicity, for any given $u$ on the domain under
consideration, the matrix $A(u)$ possesses $n$ real eigenvalues
$\lambda_1(u),\cdots, \lambda_n(u)$ and a complete set of left
eigenvectors $l_i(u)=(l_{i1}(u),\cdots,l_{in}(u))\ \i1n$:
\be l_i(u)A(u)=\lambda_i(u)l_i(u). \ee

Multiplying \eqref{sys-original} with $l_i(u)\ \i1n$, we obtain the
characteristic form of \eqref{sys-original}:
\be\label{sys}
l_i(u)\Big(\pd ut+\lambda_i(u)\pd
ux\Big)=f_i(u):=l_i(u)B(u)\quad\i1n. \ee
Clearly,
\be\label{f_i} f_i(0)=0\quad \i1n. \ee

In what follows, we assume that there exist $l,m\in \mathbb{Z},
0\leq l\leq m\leq n$, such that on the domain under consideration
\be\label{eigenvalue}
\la_p(u)<\la_q(u)\equiv 0< \la_r(u) \quad (p=1,\cdots,l;\
q=l+1,\cdots m;\ r=m+1,\cdots,n). \ee

Let us assume that the initial condition is \be\label{initial}
u(0,x)=\vph(x),\quad x\in [0,L], \ee and the boundary conditions
take the following \emph{nonlocal} form:
\begin{align}\nonumber
&v_r(t,0)=G_r(t,v_1(t,0),\cdots,v_m(t,0),v_{l+1}(t,L),\cdots,v_n(t,L))+H_r(t)\\\label{BC-v_r-0}
&\hspace{82mm} (r=m+1,\cdots,n),\\ \nonumber
&v_p(t,L)=G_p(t,v_1(t,0),\cdots,v_m(t,0),v_{l+1}(t,L),\cdots,v_n(t,L))+H_p(t)\\\label{BC-v_p-L}
&\hspace{91mm}(p=1,\cdots,l),
\end{align}
where
\be\label{v_i} v_i=l_i(u)u\quad \i1n, \ee
$v_i$ being  called the \emph{diagonalized variables} corresponding
to $\lambda_i(u)$, $L$ is the length of the space interval,
$G_p,G_r,H_p,H_r\ (p=1,\cdots,l; r=m+1,\cdots,n)$ are all suitably
smooth functions. Without loss of generality, we assume that
\be\label{G_p-G_r} G_p(t,0,\cdots,0)\equiv G_r(t,0,\cdots,0)\equiv 0
\quad (p=1,\cdots,l;\ r=m+1,\cdots,n). \ee

The basic features of this kind of \emph{nonlocal} boundary
conditions can be described as follows: on the whole boundary ($x=0$
and $x=L$) of the domain under consideration, the diagonalized
variables $(v_{m+1}(t,0),\cdots, v_n(t,0),
v_1(t,L),\cdots,v_l(t,L))$ corresponding to the \emph{coming
characteristics} can be expressed explicitly by all the other
diagonalized variables $(v_1(t,0),\cdots,
v_m(t,0),v_{l+1}(t,L),\cdots, v_n(t,L))$. It is a generalization of
the \emph{local} nonlinear boundary conditions considered in
\cite{LiJin, Wang1}, however, the local existence and uniqueness of
$C^1$ solution to this mixed problem \eqref{sys} and
\eqref{initial}-\eqref{BC-v_p-L} can still be treated under the
framework of \cite{LiYubook}. In order to study the exponential
stabilization of the $H^2$ solution, Coron et al. \cite{Coron}
established the existence and uniqueness of $H^2$ solution to this
kind of  mixed problem under the assumption that there are no zero
eigenvalues.

In this paper, we first establish the theory on semiglobal $C^1$
solution to the mixed problem \eqref{sys} and
\eqref{initial}-\eqref{BC-v_p-L} in Section 2, then, under the
assumption that system \eqref{sys} possesses no zero eigenvalues, by
means of a constructive method, we obtain the results on the local
exact boundary controllability and observability in Section 3.
Direct applications to Saint-Venant system and 1-D quasilinear wave
equation are given in Sections 4 and 5, respectively. Finally, with
the linearized Saint-Venant system and the 1-D linear wave equation
as examples, we show  that the number of both boundary controls and
boundary observations can not be reduced, and consequently, we
conclude that the exact boundary controllability for a system in a
network with loop can not be realized generically.

\section{Semiglobal $C^1$ solution to the nonlocal mixed problem}

\begin{thm}{\bf (Semiglobal $C^1$ solution)}\label{semiglobal}
Suppose that on the domain under consideration,
$l_i,\la_i,f_i,G_p,G_r,H_p,H_r\ (i=1,\cdots,n;p=1,\cdots,l;
r=m+1,\cdots,n)$ and $\vph$ are all $C^1$ functions with respect to
their arguments. Suppose furthermore \eqref{f_i}-\eqref{eigenvalue}
and \eqref{G_p-G_r} hold and the conditions of $C^1$ compatibility
are satisfied at the points $(t,x)=(0,0)$ and $(0,L)$. For any
preassigned and possibly quite large $T>0$, if
$\|\vph\|_{C^1[0,L]}$, $\|H_p\|_{C^1[0,T]}\ (p=1,\cdots,l)$ and
$\|H_r\|_{C^1[0,T]}\ (r=m+1,\cdots,n)$ are sufficiently small
(depending on $T$), then the mixed problem \eqref{sys} and
\eqref{initial}-\eqref{BC-v_p-L} admits a unique semiglobal $C^1$
solution $u=u(t,x)$ with small $C^1$ norm on the domain
$R(T)=\{(t,x)|0\leq t\leq T, 0\leq x\leq L\}$. Moreover, when
$\pd{G_p}{t}(t,\cdot)\ (p=1,\cdots,l)$ and $\pd{G_r}{t}(t,\cdot)$
$(r=m+1,\cdots,n)$ satisfy local Lipschitz conditions with respect
to the variable $v=(v_1,\cdots,v_n)^T$, we have the following
estimate
 \be\label{C1-estimate} \|u\|_{C^1[R(T)]}\leq
C(\|\vph\|_{C^1[0,L]} +\sum_{p=1}^l\|H_l\|_{C^1[0,T]}+
\sum_{r=m+1}^n\|H_r\|_{C^1[0,T]}), \ee
 where $C$ is a positive constant possibly depending on $T$.
\end{thm}

\begin{proof}
Assume that $u=u(t,x)$ is a $C^1$ solution to the mixed problem
\eqref{sys} and \eqref{initial}-\eqref{BC-v_p-L} on $R(T)$. Let
 \be
 \wt u(t,x)=u(t,L-x),\quad (t,x)\in R(T).
 \ee
$\wt u=\wt u(t,x)$ satisfies  the following mixed problem on $R(T)$:
 \begin{align}
 & l_i(\wt u)\Big(\pd {\wt u}t-\lambda_i(\wt u)\pd {\wt u}x\Big)=f_i(\wt u)\quad\i1n,\\
 & \wt u(0,x)=\vph(L-x),\quad x\in [0,L],
  \\\nonumber
 &{\wt v}_r(t,L)=G_r(t,{\wt v}_1(t,L),\cdots,{\wt v}_m(t,L),{\wt v}_{l+1}(t,0),\cdots,{\wt v}_n(t,0))+H_r(t)\\
 &\hspace{82mm} (r=m+1,\cdots,n),
   \\ \nonumber
 &{\wt v}_p(t,0)=G_p(t,{\wt v}_1(t,L),\cdots,{\wt v}_m(t,L),{\wt v}_{l+1}(t,0),\cdots,{\wt v}_n(t,0))+H_p(t)\\
 &\hspace{91mm} (p=1,\cdots,l),
 \end{align}
where
\be
{\wt v}_i(t,x)=l_i(\wt u(t,x))\wt u(t,x)=v_i(t,L-x)\quad \i1n.
\ee

Furthermore, let
 \begin{align}
 & U=\left(
        \begin{array}{c}
          u \\
          \wt u \\
        \end{array}
      \right)
 \in \mathbb{R}^{2n},\\\label{Lambda-i}
 &\Lambda_i(U)=\lambda_i(u),\ \Lambda_{n+i}(U)=-\lambda_i(\wt u) \quad \i1n,\\
 & L_i(U)=(l_i(u),0,\cdots,0)\in \mathbb{R}_{2n},
   \ L_{n+i}(U)=(0,\cdots,0,l_i(\wt u))\in \mathbb{R}_{2n} \quad  \i1n,\\
 & F_i(U)=f_i(u),\ F_{n+i}(U)=f_i(\wt u)\quad  \i1n,
   \\\label{V-j}
 & V_j=L_j(U)U\quad (j=1,\cdots,2n).
 \end{align}
It is easy to see that
 \be V_i(t,x)=v_i(t,x),\ V_{n+i}(t,x)=\wt v_i(t,x)=v_i(t,L-x)
\quad \i1n, \ee
 and  $U(t,x)=\left(
             \begin{array}{c}
               u(t,x) \\
               \wt u(t,x) \\
             \end{array}
           \right)  $
is the $C^1$ solution to the mixed problem of the following
\emph{enlarged system} with \emph{local} boundary conditions  on
$R(T)$:
 \begin{align}\label{sys-U}
 & L_j(U)\Big(\pd Ut+\Lambda_j(U)\pd Ux\Big)=F_j(U)\quad (j=1,\cdots,2n),\\
 &U(0,x)=\left(
             \begin{array}{c}
               \vph(x) \\
               \vph(L-x) \\
             \end{array}
           \right),
\quad x\in [0,L],
  \\\nonumber
 &V_r(t,0)=G_r(t,V_1(t,0),\cdots,V_m(t,0),V_{n+l+1}(t,0),\cdots,V_{2n}(t,0))+H_r(t)\\
 &\hspace{88mm}  (r=m+1,\cdots,n),
   \\ \nonumber
 &V_{n+p}(t,0)=G_p(t,V_1(t,0),\cdots,V_m(t,0),V_{n+l+1}(t,0),\cdots,V_{2n}(t,0))+H_p(t),\\
 &\hspace{101mm} (p=1,\cdots,l),
  \\\nonumber
 &V_p(t,L)=G_p(t,V_{n+1}(t,L),\cdots,V_{n+m}(t,L),V_{l+1}(t,L),\cdots,V_{n}(t,L))+H_p(t)\\
 &\hspace{102mm} (p=1,\cdots,l),
  \\\nonumber
 &V_{n+r}(t,L)=G_r(t,V_{n+1}(t,L),\cdots,V_{n+m}(t,L),V_{l+1}(t,L),\cdots,V_{n}(t,L))+H_r(t)\\\label{BC-U}
 &\hspace{98mm} (r=m+1,\cdots,n).
 \end{align}

Since the boundary conditions in  the enlarged mixed problem are all
\emph{local}, the theory on the semiglobal classical solution in
\cite{LiJin} (or \cite{Wang1}) can be directly applied to show that
the mixed problem \eqref{sys-U}-\eqref{BC-U} admits a unique
semiglobal $C^1$ solution $U(t,x)=\left(
             \begin{array}{c}
               u(t,x) \\
               \wt u(t,x) \\
             \end{array}
           \right)  $
on $R(T)$. On the other hand, noting \eqref{Lambda-i}-\eqref{V-j},
it is easy to see that ${\wt U}(t,x)=\left(
             \begin{array}{c}
               \wt u(t,L-x) \\
               u(t,L-x) \\
             \end{array}
           \right)  $
is also a $C^1$ solution to  the same  mixed problem
\eqref{sys-U}-\eqref{BC-U} on $R(T)$. By the uniqueness of $C^1$
solution (cf. \cite{LiYubook}), $U(t,x)\equiv \wt U(t,x)$, then $\wt
u(t,x)\equiv u(t,L-x)$.

Thus, from the existence of the semiglobal $C^1$ solution $U=U(t,x)$
to the \emph{enlarged} mixed problem \eqref{sys-U}-\eqref{BC-U} on
$R(T)$, we get immediately the existence of the semiglobal $C^1$
solution $u=u(t,x)$ to the original \emph{nonlocal} mixed problem
\eqref{sys} and \eqref{initial}-\eqref{BC-v_p-L} on $R(T)$.

Moreover, when $\pd{G_p}{t}(t,\cdot)\ (p=1,\cdots,l)$ and
$\pd{G_r}{t}(t,\cdot)$ $(r=m+1,\cdots,n)$ satisfy local Lipschitz
conditions with respect to the variable $v=(v_1,\cdots,v_n)^T$, the
estimate \eqref{C1-estimate} can be obtained directly from the above
argument.

\end{proof}

\begin{rem} The basic idea of the proof of Theorem \ref{semiglobal}
comes from the treatment in \cite{Coron}.
\end{rem}

\begin{cor}
Suppose that on the domain under consideration, $l_i,\la_i,f_i\
\i1n$ and $\vph$ are all $C^1$ functions with respect to their
arguments, and \eqref{f_i}-\eqref{eigenvalue} hold. If
$\|\vph\|_{C^1[0,L]}$ is sufficiently small, then Cauchy problem
\eqref{sys} and \eqref{initial} admits a unique global $C^1$
solution $u=u(t,x)$ on the whole maximum determinate domain
$D=\{(t,x)|t\geq 0, x_1(t)\leq x\leq x_2(t)\}$ (Fig. 1), where
$x=x_1(t)$ and $x=x_2(t)$ are two curves defined as follows:
 \be
\begin{cases}
\displaystyle\f {dx_1}{dt}=\max_{r=m+1,\cdots,n} \lambda_r(u(t,x_1)),\\
t=0:\ x_1=0
\end{cases} \ee
 and
  \be \begin{cases}
\displaystyle\f {dx_2}{dt}=\min_{p=1,\cdots,l} \lambda_p(u(t,x_2)),\\
t=0:\ x_2=L,
\end{cases} \ee
 respectively (see \cite{LiYubook}). Moreover, we have the following estimate
 \be \|u\|_{C^1[D]}\leq C\|\vph\|_{C^1[0,L]}. \ee

\end{cor}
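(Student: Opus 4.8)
The plan is to deduce the corollary from the mixed-problem theory already in hand, exploiting the finite propagation speed of \eqref{sys}: the values of any solution on the determinate domain $D$ are governed only by the Cauchy datum on $[0,L]$ and are insensitive to whatever is imposed on the lateral sides $x=0$ and $x=L$. First I would record that $D$ is bounded. Since $\max_{r}\lambda_r>0$ while $\min_{p}\lambda_p<0$ on the small range of $u$ under consideration (by \eqref{eigenvalue}), the curve $x=x_1(t)$ is strictly increasing and $x=x_2(t)$ strictly decreasing, so they meet at some finite time $t^*$ with $t^*\le C_0L$, the constant $C_0$ depending only on the eigenvalue gap at $u=0$; hence $D\subset R(T)$ for any fixed $T\ge t^*$, and above $t^*$ the strip $x_1(t)\le x\le x_2(t)$ is empty.

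Next I would manufacture an auxiliary mixed problem on $R(T)$ whose solution restricts to the desired one. Keeping the equation \eqref{sys} and the initial condition \eqref{initial}, I prescribe the incoming diagonalized variables by \emph{local} conditions, $v_r(t,0)=h_r(t)\ (r=m+1,\cdots,n)$ and $v_p(t,L)=h_p(t)\ (p=1,\cdots,l)$, where the $C^1$ functions $h_r,h_p$ are chosen so that the conditions of $C^1$ compatibility hold at the corners $(0,0)$ and $(0,L)$. Such $h$ can be produced explicitly: their values and first $t$-derivatives at $t=0$ are determined by $\varphi,\varphi'$ through the trace of \eqref{sys}, and any $C^1$ extension having these prescribed data (for instance the affine one) works; since $\|\varphi\|_{C^1[0,L]}$ is small one gets $\|h_r\|_{C^1[0,T]},\|h_p\|_{C^1[0,T]}\le C\|\varphi\|_{C^1[0,L]}$. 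The semiglobal $C^1$ theory for mixed problems with local boundary conditions (\cite{LiJin}, \cite{Wang1}; cf. Theorem \ref{semiglobal}) then furnishes, for $\|\varphi\|_{C^1[0,L]}$ small enough (the threshold depending only on the already fixed $T$), a unique $C^1$ solution $u$ on all of $R(T)$ with $\|u\|_{C^1[R(T)]}\le C\|\varphi\|_{C^1[0,L]}$.

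The heart of the matter, and the step I expect to be the main obstacle, is to show that the restriction $u|_{D}$ is independent of the auxiliary data $h$ and therefore is the genuine solution of the Cauchy problem. For a point $(t_0,x_0)\in D$ I would trace every characteristic backward in time: because each slope $\lambda_i(u)$ lies between $\min_{p}\lambda_p$ and $\max_{r}\lambda_r$, the $i$-th backward characteristic can cross neither the left boundary $x=x_1(t)$ nor the right boundary $x=x_2(t)$ (the zero-speed characteristics, present when $l<m$, descend vertically and trivially remain inside), so it reaches the initial segment while staying in $D$. Thus the whole backward characteristic region of $(t_0,x_0)$ lies in $D$, and the uniqueness theorem for $C^1$ solutions (\cite{LiYubook}) forces any two solutions sharing the datum $\varphi$ to coincide there, whatever the lateral conditions. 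This yields at once existence (restrict the solution built above), uniqueness on $D$, and, by the same monotonicity of $x_1,x_2$, the maximality of $D$ as the determinate domain; the bound $\|u\|_{C^1[D]}\le C\|\varphi\|_{C^1[0,L]}$ is inherited from the estimate on $R(T)$.

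The delicate points to get right are the rigorous no-crossing (comparison) argument for the characteristics, including their behaviour near the meeting point $t^*$ and the two corners, and the explicit check of $C^1$ compatibility for $h$. A fully self-contained alternative that avoids the auxiliary boundary conditions altogether is the classical route of \cite{LiYubook}: local existence in a thin strip, followed by a uniform $C^1$ a priori estimate and a continuation argument covering $D$; there the obstacle merely migrates to closing the gradient estimate along characteristics, which relies precisely on $f_i(0)=0$ together with the smallness of the data.
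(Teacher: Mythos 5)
Your proposal is sound, but note that the paper never actually proves this corollary: it is stated with a pointer to \cite{LiYubook}, i.e.\ it is treated as an instance of the classical theory of the Cauchy problem for quasilinear hyperbolic systems (local existence, uniform a priori estimates, and continuation directly on the maximum determinate domain). Your route is genuinely different and makes the statement a true corollary of the mixed-problem machinery already quoted in the paper: you attach artificial \emph{local} boundary conditions $v_r(t,0)=h_r(t)$, $v_p(t,L)=h_p(t)$ on the incoming characteristics, chosen $C^1$-compatible with $\vph$ and of size $O(\|\vph\|_{C^1[0,L]})$, solve the resulting mixed problem on $R(T)$ by the semiglobal theory of \cite{LiJin,Wang1} (equivalently, Theorem \ref{semiglobal} with $G\equiv 0$), and then discard the artificial data by the domain-of-determinacy uniqueness theorem. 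The two points you isolate are exactly what makes this reduction legitimate: first, $D$ is compact because $x_1$ is strictly increasing and $x_2$ strictly decreasing for solutions with small $C^1$ norm, so $t^*\le C_0 L$ and the horizon $T$ is fixed by the geometry \emph{before} the smallness threshold (which depends on $T$) is invoked --- this ordering is essential and you get it right; second, every backward characteristic issuing from a point of $D$ stays in $D$, since all $\la_i(u)$ lie between $\min_p\la_p(u)$ and $\max_r\la_r(u)$, so the restriction $u|_D$ is independent of $h$, and the uniqueness theorem of \cite{LiYubook} also absorbs the slight circularity that $D$ is defined through the solution itself. What your approach buys is a self-contained deduction from Theorem \ref{semiglobal}; what the classical route cited by the paper buys is that one never needs to manufacture compatible boundary data or run the no-crossing comparison argument, since the Cauchy problem is solved directly on $D$ by characteristics.
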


\vskip 5mm

%------------------------------------------------------------------------figure 1
\begin{center}
\scriptsize \setlength{\unitlength}{1mm}
\begin{picture}(50,45)
\linethickness{1pt} \put(5,3){\vector(1,0){48}}
\put(10,-5){\vector(0,1){50}} \put(45,-2){\line(0,1){42}}
%\multiput(45,-2)(0,2){20}{\line(0,1){1.5}}
\qbezier(25,18)(20,5)(10,3) \qbezier(25,18)(35,6)(45,3)
\put(5,4){$0$} \put(5,40){$t$} \put(13,10){$x_1(t)$}
\put(34,10){$x_2(t)$} \put(11,-1){$0$} \put(41,-1){$L$}
\put(50,-1){$x$} \put(24,6){$D$}
\end{picture}
\vskip5mm  Figure 1. Maximum determinate domain $D$ of the Cauchy
problem
\end{center}

\vskip 5mm

\section{Local exact boundary controllability and observability}

When system \eqref{sys} possesses no zero eigenvalues (namely, $l=m$
in \eqref{eigenvalue}):
 \be\label{eigen-non-0} \la_r(u)<0<
\la_s(u) \quad (r=1,\cdots,m;s=m+1,\cdots,n), \ee
 the nonlocal boundary conditions \eqref{BC-v_r-0}-\eqref{BC-v_p-L} become
 \begin{align}\nonumber
&v_s(t,0)=G_s(t,v_1(t,0),\cdots,v_m(t,0),v_{m+1}(t,L),\cdots,v_n(t,L))+H_s(t)
 \\\label{BC-v_s-0}
&\hspace{80mm}\quad (s=m+1,\cdots,n),\\ \nonumber
&v_r(t,L)=G_r(t,v_1(t,0),\cdots,v_m(t,0),v_{m+1}(t,L),\cdots,v_n(t,L))+H_r(t)
 \\\label{BC-v_r-L}
&\hspace{87mm}\quad (r=1,\cdots,m),
\end{align}
where $v_i\ \i1n$ are still given by \eqref{v_i}, and without loss
of generality, we assume that \be\label{G_r-G_s}
G_r(t,0,\cdots,0)\equiv G_s(t,0,\cdots,0)\equiv 0 \quad
(r=1,\cdots,m;s=m+1,\cdots,n). \ee

Adopting the constructive method given in \cite{LiRao} to establish
the exact boundary controllability, we obtain

\begin{thm}{\bf (Exact boundary controllability)}\label{controllability}
Suppose that $l_i,\la_i,f_i,G_i\ (i=1,\cdots,n)$ and $\varphi$ are
all $C^1$ functions with respect to their arguments. Suppose
furthermore that \eqref{f_i},\eqref{eigen-non-0} and \eqref{G_r-G_s}
hold. Let \be\label{T} T>L \max_{i=1,\cdots,n}\frac{1}{|\la_i(0)|}.
\ee For any given initial data $\vph$ and final data $\psi$, if
$\|\varphi\|_{C^1[0,L]}$ and $\|\psi\|_{C^1[0,L]}$ are sufficiently
small, then these exist boundary controls $H_i(t)\ \i1n$ with small
$C^1[0,T]$ norms, such that the corresponding mixed problem
\eqref{sys},\eqref{initial} and \eqref{BC-v_s-0}-\eqref{BC-v_r-L}
admits a unique semiglobal $C^1$ solution $u=u(t,x)$ with small
$C^1$ norm on the domain $R(T)=\{(t,x)|0\leq t\leq T, 0\leq x\leq
L\}$, which satisfies exactly the final condition \be
u(T,x)=\psi(x),\quad x\in [0,L]. \ee
\end{thm}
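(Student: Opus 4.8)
The plan is to reduce the theorem to a pure construction and then read off the controls. It suffices to build a single $C^1$ function $u=u(t,x)$ on $R(T)$, with small $C^1$ norm, that solves \eqref{sys}, satisfies the initial condition $u(0,\cdot)=\vph$ and the final condition $u(T,\cdot)=\psi$. Once such a $u$ is in hand every boundary trace $v_i(t,0),v_i(t,L)$ is known, and because the nonlocal conditions \eqref{BC-v_s-0}--\eqref{BC-v_r-L} express the incoming variables \emph{explicitly}, I simply define
\be
H_s(t)=v_s(t,0)-G_s(t,v_1(t,0),\cdots,v_m(t,0),v_{m+1}(t,L),\cdots,v_n(t,L))\quad(s=m+1,\cdots,n),
\ee
and likewise $H_r(t)=v_r(t,L)-G_r(\cdots)$ for $r=1,\cdots,m$. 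By \eqref{G_r-G_s} and the smallness of the traces, these $H_i$ have small $C^1[0,T]$ norm; and since $u$ then solves \eqref{sys}, \eqref{initial} and, by this very definition, the boundary conditions, the semiglobal uniqueness of Theorem \ref{semiglobal} shows that $u$ is exactly the solution generated by the controls $H_i$, so it attains $\psi$ at time $T$. Thus the nonlocality of the boundary conditions causes no trouble here: it only enters through an explicit algebraic formula for the controls.

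To build $u$ I would follow the constructive method of \cite{LiRao}, exploiting the no-zero-eigenvalue structure \eqref{eigen-non-0} and the time condition \eqref{T}; write $T_*=L\max_i 1/|\la_i(0)|$, so that $T>T_*$. The forward Cauchy problem for \eqref{sys} with data $\vph$ determines $u$ uniquely, independently of any boundary data, on its maximum determinate domain $D$ --- the curvilinear lens of the corollary above, bounded by the fastest rightward characteristic issuing from $(0,0)$ and the fastest leftward characteristic issuing from $(0,L)$, with apex at some time $t_c$. Symmetrically, reversing time by $t\mapsto T-t$ (under which \eqref{sys} keeps its hyperbolic form \eqref{eigen-non-0}, the two characteristic families merely exchanging roles), the final data $\psi$ determines $u$ uniquely on a reflected determinate lens $D'$ contained in the slab $T-t_c\le t\le T$. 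A comparison of the extreme characteristic speeds gives $2t_c\le T_*$, so the hypothesis $T>T_*$ forces $t_c<T-t_c$ and hence makes the two forced lenses $D$ and $D'$ disjoint: this is exactly why two-sided control succeeds in time $T_*$ rather than the round-trip time, since each incoming family is steered from its own incoming side and needs only time $\le T_*$ to cross $[0,L]$.

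I would then set $u:=u_\vph$ on $D$ and $u:=u_\psi$ on $D'$, and extend $u$ to all of $R(T)$ as a small $C^1$ solution of \eqref{sys}. On the complementary regions --- which are \emph{not} forced by either $\vph$ or $\psi$, the solution there depending also on the boundary data --- this extension is carried out by solving a finite sequence of well-posed forward, backward and one-sided mixed problems, taking as data the traces of $u_\vph$ and $u_\psi$ already fixed along the characteristics bounding $D$ and $D'$, and choosing the remaining incoming boundary data to be small and compatible at the relevant corners. Each such auxiliary problem has a unique semiglobal $C^1$ solution whose norm is controlled by its data, by the local theory underlying Theorem \ref{semiglobal} (the no-zero-eigenvalue case of \cite{LiJin}); applying the a priori estimate \eqref{C1-estimate} at every step keeps the whole of $u$, its boundary traces, and hence the recovered controls $H_i$, small provided $\|\vph\|_{C^1[0,L]}$ and $\|\psi\|_{C^1[0,L]}$ are small.

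The main obstacle I expect is the global $C^1$ matching. Along the interface characteristics separating the $\vph$-driven region from the $\psi$-driven region, and at the four corners of $R(T)$, the data passed from one auxiliary solve to the next must satisfy the first-order ($C^1$) compatibility conditions required by Theorem \ref{semiglobal}; the delicate point is to verify that the still-free incoming boundary data can be chosen, and kept small, so that all the pieces agree to first order and assemble into a genuine $C^1$ solution rather than a merely piecewise-smooth one. A secondary, quantitative difficulty is that the constant $C$ in \eqref{C1-estimate} depends on $T$, so the smallness of the final controls must be tracked through the fixed finite number of solves, and this is what ultimately dictates how small $\|\vph\|_{C^1[0,L]}$ and $\|\psi\|_{C^1[0,L]}$ have to be taken.
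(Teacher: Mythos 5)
Your first step --- build a single small $C^1$ solution $u$ of \eqref{sys} on $R(T)$ with $u(0,\cdot)=\varphi$, $u(T,\cdot)=\psi$, and then \emph{define} the controls by $H_s(t)=v_s(t,0)-G_s(t,\ldots)$, $H_r(t)=v_r(t,L)-G_r(t,\ldots)$, invoking the uniqueness part of Theorem \ref{semiglobal} --- is exactly the paper's approach (this is the whole reason the nonlocal boundary conditions \eqref{BC-v_s-0}--\eqref{BC-v_r-L} cause no difficulty: the controls enter additively and the incoming variables are explicit). Your geometric observations (disjointness of the two determinate lenses, $2t_c\le T_*$) are also correct. The gap is in the step you yourself flag as ``the main obstacle'': the extension of $u$ from $D\cup D'$ to all of $R(T)$. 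As sketched, it would fail, and not for a merely technical reason. First, gluing classical solutions along the \emph{characteristic} curves $x_1(t)$, $x_2(t)$ bounding the lenses cannot give a $C^1$ solution in general: jumps in first derivatives (weak discontinuities) of hyperbolic systems live precisely on characteristics, so continuity of $u$ across these curves does not force continuity of $\partial u$, and generic choices of the ``remaining incoming boundary data'' will produce a solution that is only piecewise $C^1$. Second, the auxiliary problems you propose --- mixed problems ``taking as data the traces of $u_\varphi$ and $u_\psi$ along the characteristics bounding $D$ and $D'$'' --- are not among the well-posed problems of the theory you cite: the semiglobal theory of \cite{LiJin}, \cite{Wang1}, \cite{LiYubook} handles data on lines $t=\mathrm{const}$ (forward/backward problems) or $x=\mathrm{const}$ (sidewise problems, possible here since there are no zero eigenvalues); prescribing a full trace on a single characteristic curve is a characteristic Cauchy problem, which neither determines a solution nor is covered by those results.

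The missing idea is how the constructive method of \cite{LiRao}, which the paper invokes, is organized so that \emph{no matching along characteristics ever occurs}. One solves the forward mixed problem from $\varphi$ on a strip $[0,T_1]\times[0,L]$ and the backward problem from $\psi$ on $[T_2,T]\times[0,L]$, both with \emph{artificial} small admissible boundary data; one then picks an interior point $x_m\in(0,L)$ --- its position, determined by the ratio of the extreme positive and negative speeds, is exactly what makes the two-sided time $T>L\max_i 1/|\lambda_i(0)|$ suffice instead of the one-sided sum $L\bigl(\max_s 1/\lambda_s+\max_r 1/|\lambda_r|\bigr)$ --- and defines on the vertical line $x=x_m$ a small $C^1$ function $a(t)$ equal to the trace of the forward solution for $t\le T_1$ and of the backward solution for $t\ge T_2$, interpolated in between. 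Then one solves the two sidewise mixed problems, leftward and rightward from $x=x_m$, with ``initial'' data $a$ and with the incoming components at $t=0$ and $t=T$ taken from $\varphi$ and $\psi$. The two sidewise solutions glue to a $C^1$ solution across $x=x_m$ \emph{automatically}, because this line is non-characteristic: since $A(u)$ is invertible, $\partial_x u=A(u)^{-1}\bigl(B(u)-\partial_t u\bigr)$ is determined on the line by the shared trace. Finally, $u(0,\cdot)=\varphi$ and $u(T,\cdot)=\psi$ in \emph{all} components are recovered not by gluing but by uniqueness: on the sidewise domains of determinacy of $\{x=x_m,\ 0\le t\le T_1\}$ and $\{x=x_m,\ T_2\le t\le T\}$ (which cover $t=0$ and $t=T$ by the choice of $T_1$, $T_2$, $x_m$), the sidewise solutions coincide with the forward and backward solutions already constructed. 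Reorganizing your argument along these lines --- coordinate lines as the only interfaces, determinacy/uniqueness instead of characteristic gluing --- closes the gap and yields the theorem.
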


Applying the constructive method given  in \cite{Li3} to establish
the exact boundary observability, we have

\begin{thm}{\bf (Exact boundary observability)}
\label{observability} Suppose that $l_i,\la_i,f_i,G_i,H_i$
$(i=1,\cdots,n)$ and $\vph$ are all $C^1$ functions with respect to
their arguments, and $\pd {G_i}{t}(t,\cdot)\ \i1n$ satisfy local
Lipschitz conditions with respect to the variable
$v=(v_1,\cdots,v_n)^T$. Suppose furthermore that
\eqref{f_i},\eqref{eigen-non-0} and \eqref{G_r-G_s}-\eqref{T} hold.
Suppose finally that $\|\vph\|_{C^1[0,L]}$ and $\|H_i\|_{C^1[0,T]}\
\i1n$ are sufficiently small, and the conditions of $C^1$
compatibility for the mixed problem \eqref{sys},\eqref{initial} and
\eqref{BC-v_s-0}-\eqref{BC-v_r-L} are satisfied at the points
$(t,x)=(0,0)$ and $(0,L)$. Then the initial data $\vph$ can be
uniquely determined by the boundary observations $\overline v_r(t):=
v_r(t,0) (r=1,\cdots,m)$ and $\overline{\overline v}_s(t):=
v_s(t,L)\ (s=m+1,\cdots,n)$ together with the known boundary
functions $H_i(t)\ \i1n$. Moreover, the following observability
estimate holds: \be \|\vph\|_{C^1[0,L]}\leq C
(\sum_{r=1}^m\|\overline v_r\|_{C^1[0,T]}
+\sum_{s=m+1}^n\|\overline{\overline
v}_s\|_{C^1[0,T]}+\sum_{i=1}^n\|H_i\|_{C^1[0,T]}), \ee where $C$ is
a positive constant possibly depending on $T$.
\end{thm}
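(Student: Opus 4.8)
The plan is to prove observability by reducing it to controllability and the semiglobal existence theory already established, exploiting the same reflection/doubling trick used in Theorem \ref{semiglobal}. The key observation is that the boundary observations $\overline v_r(t)=v_r(t,0)$ and $\overline{\overline v}_s(t)=v_s(t,L)$, together with the known functions $H_i(t)$, let us recover \emph{all} the diagonalized variables on the boundary. Indeed, from the nonlocal boundary conditions \eqref{BC-v_s-0}--\eqref{BC-v_r-L}, once the ``observable'' traces $(v_1(t,0),\cdots,v_m(t,0))$ and $(v_{m+1}(t,L),\cdots,v_n(t,L))$ are known, the remaining traces $v_s(t,0)$ and $v_r(t,L)$ are determined explicitly by the formulas $G_s$, $G_r$ plus $H_s$, $H_r$. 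Thus the full boundary data of $u$ on both $x=0$ and $x=L$ over $[0,T]$ is known.

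Given the complete boundary trace of $u$, I would then run a \emph{rightward} and a \emph{leftward} characteristic / sidewise Cauchy problem to propagate this boundary information into the interior and reconstruct $u$ on all of $R(T)$, in particular recovering $u(0,x)=\varphi(x)$. Concretely, following the constructive method of \cite{Li3}, I would exhibit $\varphi$ explicitly: first construct a $C^1$ solution $\tilde u$ to system \eqref{sys} on $R(T)$ taking the now-known boundary values on $x=0$ (a one-sided boundary-value problem in $x$, well-posed because there are no zero eigenvalues by \eqref{eigen-non-0}), and symmetrically a solution built from the $x=L$ data; the time-interval condition \eqref{T} guarantees that the characteristics emanating from the two boundaries together sweep out the whole domain, so that the restriction at $t=0$ is well-defined and equals $\varphi$. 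Since the mixed problem already has a unique semiglobal $C^1$ solution by Theorem \ref{semiglobal} (whose hypotheses are exactly those assumed here, including the local Lipschitz condition on $\partial G_i/\partial t$), the reconstructed function must coincide with the actual solution, giving uniqueness of $\varphi$.

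For the quantitative observability estimate, I would track $C^1$ bounds through each step. The boundary functions $H_i$ and the observations $\overline v_r,\overline{\overline v}_s$ have small $C^1[0,T]$ norms by hypothesis; the assumption \eqref{G_r-G_s} that $G_r(t,0,\cdots,0)\equiv G_s(t,0,\cdots,0)\equiv 0$ together with smallness of all quantities lets me bound the reconstructed traces $v_s(t,0),v_r(t,L)$ linearly in terms of $\sum_r\|\overline v_r\|_{C^1}+\sum_s\|\overline{\overline v}_s\|_{C^1}+\sum_i\|H_i\|_{C^1}$, picking up the Lipschitz constant of $G$. Propagating these bounds along characteristics (the flow map and its $x$-derivative are controlled uniformly on $R(T)$ because $\|u\|_{C^1}$ is small and $T$ is fixed) yields a constant $C=C(T)$ with $\|\varphi\|_{C^1[0,L]}\le C\big(\sum_{r=1}^m\|\overline v_r\|_{C^1[0,T]}+\sum_{s=m+1}^n\|\overline{\overline v}_s\|_{C^1[0,T]}+\sum_{i=1}^n\|H_i\|_{C^1[0,T]}\big)$, as claimed.

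I expect the main obstacle to be the nonlocal coupling in the boundary conditions when reconstructing the interior solution. Unlike the purely local case in \cite{Li3}, the value $v_s(t,0)$ depends not only on $x=0$ data but also on $x=L$ data (and vice versa), so the two sidewise problems are coupled through $G$. I would resolve this by passing to the enlarged \emph{local} system \eqref{sys-U}--\eqref{BC-U} of Theorem \ref{semiglobal}, in which the doubling $U=(u,\tilde u)^T$ with $\tilde u(t,x)=u(t,L-x)$ converts the nonlocal conditions into genuinely local ones at each endpoint; applying the local observability construction to the enlarged system and then restoring $u=u(t,x)$ via the established identity $\tilde u(t,x)\equiv u(t,L-x)$ should decouple the reconstruction and let the estimate go through with the Lipschitz hypothesis on $\partial G_i/\partial t$ supplying the $C^1$ control needed for the enlarged boundary data.
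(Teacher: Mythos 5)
Your first step---recovering the full boundary traces $v_s(t,0)$ $(s=m+1,\cdots,n)$ and $v_r(t,L)$ $(r=1,\cdots,m)$ by plugging the observed quantities into the nonlocal boundary conditions \eqref{BC-v_s-0}--\eqref{BC-v_r-L}---is correct, and it is indeed the point where the nonlocal structure helps rather than hurts. The genuine gap is in your reconstruction step. The rightward sidewise problem with data on $\{x=0\}\times[0,T]$ determines $u$ only on its maximum determinate domain, which (to leading order) is $\{(t,x):\ x\max_s(1/\lambda_s(0))\le t\le T-x\max_r(1/|\lambda_r(0)|)\}$, and the leftward problem determines $u$ on the mirror-image region; both regions pinch to single points on the line $t=0$ (the corners $(0,0)$ and $(0,L)$). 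Hence their union \emph{never} contains the initial segment $\{t=0\}\times(0,L)$, no matter how large $T$ is: a point such as $(\varepsilon,L/2)$ has backward characteristics (in the $x$-direction) exiting through $t<0$ before reaching either lateral boundary. What condition \eqref{T} actually buys is that the union of the two determinate domains contains a full horizontal segment $\{t=t_0\}\times[0,L]$ for some intermediate $t_0$ (roughly $T/2$). So your claim that ``the restriction at $t=0$ is well-defined and equals $\varphi$'' fails; the construction as you describe it stops at $t_0$ and never reaches the initial data, and the same defect propagates into your estimate paragraph.

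The missing step, which is the second half of the constructive method of \cite{Li3}: having determined $u(t_0,\cdot)$ on all of $[0,L]$, solve the \emph{backward-in-time} mixed problem for \eqref{sys} on $[0,t_0]\times[0,L]$ with ``initial'' data $u(t_0,\cdot)$ and the nonlocal boundary conditions \eqref{BC-v_s-0}--\eqref{BC-v_r-L}, whose inhomogeneous terms $H_i$ are known; this is Theorem \ref{semiglobal} applied with time reversed, and its trace at $t=0$ recovers $\varphi$. This is also precisely where the hypotheses you did not use in your main argument come in: the known functions $H_i$ and the Lipschitz condition on $\partial G_i/\partial t$ are what make the $C^1$ estimate \eqref{C1-estimate} available for this backward nonlocal problem, and chaining it with the sidewise estimates gives the observability inequality. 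Your fallback idea of doubling to the enlarged local system \eqref{sys-U}--\eqref{BC-U} is sound---the observations of the original problem correspond exactly to the outgoing variables of the enlarged system at both ends---but it does not let you avoid this issue: the ``local observability construction'' you would invoke there consists of the same two sidewise problems \emph{plus} the backward-in-time solve, so the backward step must appear in either formulation.
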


\section{Application 1---Saint-Venant system}

Consider the Saint-Venant system for a horizontal and cylindrical
canal (see \cite{Halleux, Leugering, Li1, StVenant})
\be\label{Saint-Venant}
\begin{cases}
A_t+(AV)_x=0,\\
V_t+S_x=0,
\end{cases}
\ee where $A>0$ stands for the area of the cross section occupied by
the water, $V$ is the average velocity over the cross section and
\be S=\f 12 V^2+gH(A)+gY, \ee where $g$ is the gravity constant, $Y$
is the altitude of the canal bed (we may assume $Y=0$ without loss
of generality), $H$ is the depth of water,  which is a $C^1$
function of $A$ satisfying \be\label{H'(A)} H'(A)>0, \quad \forall
A>0. \ee

Let the initial condition be \be\label{SV-initial} A(0,x)=A_0(x),\
V(0,x)=V_0(x),\quad x\in [0,L], \ee and the boundary conditions take
the following nonlocal form:
\begin{align}\label{BC-S}
&S(t,0)-S(t,L)=h(t),\\\label{BC-Q}
&Q(t,0)-Q(t,L)=\overline h(t),
\end{align}
where $Q=AV$ denotes the flux.

We discuss system \eqref{Saint-Venant} near a  constant subcritical
equilibrium $(\wt A,\wt V)\ (\wt A>0)$ which satisfies
\be\label{subcritical} \wt V^2<g\wt AH'(\wt A). \ee

Introducing Riemann Invariants
 \be\label{sys-rs} r=\f 12(V-\wt
V-G(A)),\quad s=\f 12(V-\wt V+G(A)), \ee where \be G(A)=\int_{\wt
A}^A\sqrt{\f {gH'(A)}{A}}, \ee then
 \be V=r+s+\wt V,\quad
A=G^{-1}(s-r), \ee where $G^{-1}$ denotes the inverse function of
$G$. By \eqref{subcritical}, in a $C^1$ neighbourhood of $(A,V)=(\wt
A,\wt V)$ (correspondingly, $(r,s)=(0,0)$), \eqref{Saint-Venant} can
be equivalently rewritten as \be\label{SV-sys-rs}
\begin{cases}
r_t+\la_1\, r_x=0,\\
s_t+\la_2\, s_x=0,
\end{cases}
\ee
where
\be
\la_1=V-\sqrt{gAH'(A)}<0<\la_2=V+\sqrt{gAH'(A)}.
\ee

The initial condition \eqref{SV-initial} becomes
\be\label{SV-initial-rs} r(0,x)=r_0(x),\ s(0,x)=s_0(x),\quad x\in
[0,L], \ee where \be r_0(x)=\f 12(V_0(x)-\wt V-G(A_0(x))),\quad
s_0(x)=\f 12(V_0(x)-\wt V+G(A_0(x))). \ee

In order to change nonlocal boundary conditions
\eqref{BC-S}-\eqref{BC-Q} into the form of
\eqref{BC-v_r-0}-\eqref{BC-v_p-L}, we first rewrite them as
\begin{align}
&P_1:= \f 12(V_1^2-V_2^2)+g(H(A_1)-H(A_2))-h(t)=0,\\
&P_2:= A_1V_1-A_2V_2-\overline h(t)=0,
\end{align}
where
 \be V_1=V(t,0), V_2=V(t,L), A_1=A(t,0), A_2=A(t,L). \ee Let
 \be r_1=r(t,0), r_2=r(t,L), s_1=s(t,0), s_2=s(t,L). \ee Then, at the
point $(A,V)=(\wt A,\wt V)$ (correspondingly, $(r,s)=(0,0)$),
 \be
\det\left(\frac{\partial(P_1,P_2)}{\partial(s_1,r_2)}\right)
=2\sqrt{\frac{\wt A}{gH'(\wt A)}}\cdot(\wt V^2-g\wt A H'(\wt A))<0.
\ee
 By the Implicit Function Theorem, in a $C^1$ neighbourhood of
$(A,V)=(\wt A,\wt V)$ (correspondingly, $(r,s)=(0,0)$), boundary
conditions \eqref{BC-S}-\eqref{BC-Q} can be furthermore rewritten as
\begin{align}\label{SV-BC-rs-1}
&s(t,0)=F(t,r(t,0),s(t,L))+f(t),\\\label{SV-BC-rs-2}
&r(t,L)=\overline F(t,r(t,0),s(t,L))+\overline f(t),
\end{align}
where $F,\overline F$ are $C^1$ functions with respect to their
arguments, and, without loss of generality, we may assume that
 \be F(t,0,0) \equiv
\overline F(t,0,0) \equiv 0, \ee
 consequently,
  \be \|(h,\overline
h)\|_{(C^1[0,T])^2} \rightarrow 0 \Longleftrightarrow \|(f,\overline
f)\|_{(C^1[0,T])^2} \rightarrow 0. \ee

Applying Theorem \ref{semiglobal} to the mixed problem \eqref{SV-sys-rs},
\eqref{SV-initial-rs} and \eqref{SV-BC-rs-1}-\eqref{SV-BC-rs-2}, we obtain

\begin{thm}{\bf (Semiglobal $C^1$ solution)}\label{SV-semiglobal}
Let $(\wt A,\wt V)\ (\wt A>0)$ be a constant subcritical
equilibrium. For any preassigned and possibly quite large $T>0$, if
$\|(A_0-\wt A,V_0-\wt V)\|_{(C^1[0,L])^2}$
 and $\|(h,\overline h)\|_{(C^1[0,T])^2}$ are sufficiently small,
and the conditions of $C^1$ compatibility  are satisfied at the
points $(t,x)=(0,0)$ and $(0,L)$, then the mixed problem
\eqref{Saint-Venant} and \eqref{SV-initial}-\eqref{BC-Q} admits a
unique semiglobal $C^1$ solution $(A,V)=(A(t,x),V(t,x))$ on
$R(T)=\{(t,x)|0\leq t\leq T, 0\leq x\leq L\}$,  $\|(A-\wt A,V-\wt
V)\|_{(C^1[R(T)])^2}$ being  small, and the following estimate
holds:
  \be \|(A-\wt A,V-\wt V)\|_{(C^1[R(T)])^2}\leq C(\|(A_0-\wt
A,V_0-\wt V)\|_{(C^1[0,L])^2}+\|(h,\overline h)\|_{(C^1[0,T])^2}),
\ee where $C$ is a positive constant possibly depending on $T$.
\end{thm}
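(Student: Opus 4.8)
The plan is to reduce Theorem \ref{SV-semiglobal} to a direct application of the already-established Theorem \ref{semiglobal}, since the machinery for that abstract result was specifically set up to handle exactly this kind of nonlocal boundary condition. First I would verify that the diagonalized (Riemann-invariant) form \eqref{SV-sys-rs} fits the general framework of \eqref{sys}: this is the $n=2$ case with $\lambda_1=V-\sqrt{gAH'(A)}<0<\lambda_2=V+\sqrt{gAH'(A)}$, so there are no zero eigenvalues and \eqref{eigenvalue} holds with $l=m=1$. Here the characteristic variables are $v_1=r$ and $v_2=s$, the source terms vanish identically so \eqref{f_i} is trivially satisfied, and the equivalence of \eqref{Saint-Venant} with \eqref{SV-sys-rs} in a $C^1$ neighbourhood of the subcritical equilibrium has already been recorded in the excerpt.

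Next I would confirm that the boundary conditions \eqref{SV-BC-rs-1}--\eqref{SV-BC-rs-2} are precisely of the nonlocal form \eqref{BC-v_r-0}--\eqref{BC-v_p-L}. Indeed, $s(t,0)=v_2(t,0)$ corresponds to the coming characteristic at $x=0$ expressed through $r(t,0)=v_1(t,0)$ and $s(t,L)=v_2(t,L)$, matching \eqref{BC-v_r-0} with $G_2=F$ and $H_2=f$; likewise $r(t,L)=v_1(t,L)$ is the coming characteristic at $x=L$ expressed through the same two outgoing quantities, matching \eqref{BC-v_p-L} with $G_1=\overline F$ and $H_1=\overline f$. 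The normalization $F(t,0,0)\equiv\overline F(t,0,0)\equiv0$ supplies \eqref{G_p-G_r}, and the smoothness of $F,\overline F$ obtained from the Implicit Function Theorem gives the required $C^1$ regularity of the $G$'s. With these identifications and the hypothesis that the $C^1$ compatibility conditions hold at $(0,0)$ and $(0,L)$, Theorem \ref{semiglobal} applies verbatim and yields a unique semiglobal $C^1$ solution $(r,s)$ with small $C^1$ norm on $R(T)$, together with the estimate \eqref{C1-estimate} in the variables $(r,s,f,\overline f)$.

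Finally I would translate the conclusion back to the physical variables $(A,V)$. The change of variables \eqref{sys-rs}, namely $V=r+s+\wt V$ and $A=G^{-1}(s-r)$, is a $C^1$ diffeomorphism in a neighbourhood of $(r,s)=(0,0)$ by \eqref{subcritical} (which guarantees $G'=\sqrt{gH'(A)/A}$ does not degenerate), so smallness and $C^1$ estimates transfer in both directions between $(r,s)$ and $(A-\wt A,V-\wt V)$. Combining this with the stated equivalence $\|(h,\overline h)\|_{(C^1[0,T])^2}\to0\Longleftrightarrow\|(f,\overline f)\|_{(C^1[0,T])^2}\to0$ and the corresponding equivalence between the initial data $(A_0-\wt A,V_0-\wt V)$ and $(r_0,s_0)$ converts \eqref{C1-estimate} into the claimed estimate in $(A,V),(h,\overline h)$.

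I do not expect any genuine obstacle here, since the abstract theorem was engineered for this application; the only points requiring care are bookkeeping ones. The main thing to check is that the one-to-one correspondence of boundary indices is correct—that the variable solved for on each boundary is truly the incoming characteristic—and that the Implicit Function Theorem step, justified by the nonvanishing Jacobian $\det(\partial(P_1,P_2)/\partial(s_1,r_2))\neq0$ at the equilibrium, indeed delivers $G_1,G_2$ of class $C^1$ with the right arguments. Once the dictionary between \eqref{SV-BC-rs-1}--\eqref{SV-BC-rs-2} and \eqref{BC-v_r-0}--\eqref{BC-v_p-L} is fixed, the rest is the routine diffeomorphism argument above.
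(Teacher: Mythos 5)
Your proposal is correct and follows essentially the same route as the paper: Section 4 of the paper performs exactly this reduction (Riemann invariants, diagonal form, Implicit Function Theorem rewriting of the nonlocal boundary conditions with the nonvanishing Jacobian $\det\left(\partial(P_1,P_2)/\partial(s_1,r_2)\right)<0$, and the smallness equivalences), and then the paper's entire proof of Theorem \ref{SV-semiglobal} is the one-line application of Theorem \ref{semiglobal} to the transformed mixed problem, with the diffeomorphism back to $(A,V)$ giving the stated estimate. Your bookkeeping of indices ($n=2$, $l=m=1$, $v_1=r$, $v_2=s$, $G_2=F$, $G_1=\overline F$) matches the paper's identifications exactly.
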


As in \cite{Li1}, by Theorem \ref{controllability} we get

\begin{thm}{\bf (Exact boundary controllability)}
\label{SV-controllability} Let $(\wt A,\wt V)\ (\wt A>0)$ be a
constant subcritical equilibrium. Let \be\label{SV-T} T>L \max
\left\{\f{1}{|\wt \lambda_1|},\f{1}{\wt \lambda_2}\right\}, \ee
where
 \be \wt \la_1=\wt V-\sqrt{g\wt AH'(\wt A)}<0< \wt \la_2=\wt
V+\sqrt{g\wt AH'(\wt A)}. \ee
 For any given initial data $(A_0,V_0)$ and final data $(A_T,V_T)$,
if $\|(A_0-\wt A,V_0-\wt V)\|_{(C^1[0,L])^2}$ and $\|(A_T-\wt
A,V_T-\wt V)\|_{(C^1[0,L])^2}$ are sufficiently small (possibly
depending on $T$), there exist boundary controls $(h(t),\overline
h(t))$ with small $\|(h,\overline h)\|_{(C^1[0,T])^2}$, such that
the mixed problem \eqref{Saint-Venant} and
\eqref{SV-initial}-\eqref{BC-Q} admits a unique semiglobal $C^1$
solution $(A,V)=(A(t,x),V(t,x))$ with small $\|(A-\wt A,V-\wt
V)\|_{(C^1[R(T)])^2}$ on $R(T)$, which satisfies exactly the final
condition: \be A(T,x)=A_T(x),V(T,x)=V_T(x),\quad x\in [0,L]. \ee
\end{thm}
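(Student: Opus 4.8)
The plan is to reduce Theorem \ref{SV-controllability} to the abstract controllability result in Theorem \ref{controllability} by checking that the diagonalized Saint-Venant system together with its nonlocal boundary conditions fits exactly into the framework of that theorem. First I would record that in Riemann-invariant form the system \eqref{SV-sys-rs} is already diagonalized with $n=2$, $m=1$: the eigenvalue $\la_1=V-\sqrt{gAH'(A)}$ is strictly negative and $\la_2=V+\sqrt{gAH'(A)}$ is strictly positive in a $C^1$ neighbourhood of the subcritical equilibrium, so the no-zero-eigenvalue condition \eqref{eigen-non-0} holds. Since the right-hand sides of \eqref{SV-sys-rs} vanish identically, condition \eqref{f_i} is trivially satisfied, and the normalization \eqref{G_r-G_s} corresponds to $F(t,0,0)\equiv \overline F(t,0,0)\equiv 0$, which is the assumption made just before \eqref{SV-BC-rs-1}. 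Thus the diagonalized variables are $v_1=r$ and $v_2=s$, the coming characteristics at the two endpoints are $s(t,0)$ and $r(t,L)$, and the nonlocal boundary conditions \eqref{SV-BC-rs-1}-\eqref{SV-BC-rs-2} have precisely the structure of \eqref{BC-v_s-0}-\eqref{BC-v_r-L} with controls $H_{m+1}=f$ and $H_1=\overline f$.

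The next step is to match the time condition. Because $\la_i\to \wt\la_i$ as $(r,s)\to(0,0)$, the requirement \eqref{T} of Theorem \ref{controllability}, namely $T>L\max_i 1/|\la_i(0)|$, is exactly the condition \eqref{SV-T} expressed in terms of the equilibrium eigenvalues $\wt\la_1,\wt\la_2$. Given the smallness of $\|(A_0-\wt A,V_0-\wt V)\|_{(C^1[0,L])^2}$ and $\|(A_T-\wt A,V_T-\wt V)\|_{(C^1[0,L])^2}$, the transformation \eqref{sys-rs} is a $C^1$ diffeomorphism near the equilibrium, so the corresponding Riemann data $(r_0,s_0)$ and final data $(r_T,s_T)$, obtained by applying \eqref{sys-rs} to $(A_0,V_0)$ and $(A_T,V_T)$, have small $C^1[0,L]$ norms. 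I would then invoke Theorem \ref{controllability} directly on the diagonalized problem \eqref{SV-sys-rs}, \eqref{SV-initial-rs}, \eqref{SV-BC-rs-1}-\eqref{SV-BC-rs-2} to produce boundary controls $f,\overline f$ with small $C^1[0,T]$ norm for which the semiglobal solution attains the final state $(r_T,s_T)$ at time $T$.

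Finally I would transfer the conclusion back to the physical variables. By the inversion formulas $V=r+s+\wt V$, $A=G^{-1}(s-r)$, the attained final Riemann state $(r,s)(T,\cdot)=(r_T,s_T)$ is equivalent to $A(T,\cdot)=A_T$, $V(T,\cdot)=V_T$, and the smallness of $\|(A-\wt A,V-\wt V)\|_{(C^1[R(T)])^2}$ follows from the smallness of $\|(r,s)\|_{(C^1[R(T)])^2}$ through the same $C^1$ diffeomorphism. The controls $(h,\overline h)$ on the original data are recovered from $(f,\overline f)$ via the Implicit Function Theorem computation preceding \eqref{SV-BC-rs-1}, and the stated equivalence $\|(h,\overline h)\|\to 0 \Leftrightarrow \|(f,\overline f)\|\to 0$ guarantees that the physical controls are also small.

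I expect the only genuine point requiring care, rather than routine translation, is the bookkeeping in the change of variables: one must verify that the $C^1$ diffeomorphism $(A,V)\leftrightarrow(r,s)$ and the boundary map produced by the Implicit Function Theorem are each bi-Lipschitz (in $C^1$ norm) on the relevant small neighbourhood, so that smallness and $C^1$ bounds pass faithfully in both directions; the controllability mechanism itself is supplied wholesale by Theorem \ref{controllability} and needs no reworking.
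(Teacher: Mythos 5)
Your proposal is correct and follows essentially the same route as the paper: Section 4's reduction to Riemann invariants \eqref{SV-sys-rs}, the Implicit Function Theorem rewriting of \eqref{BC-S}--\eqref{BC-Q} into the form \eqref{SV-BC-rs-1}--\eqref{SV-BC-rs-2}, and then a direct application of Theorem \ref{controllability} (the paper's proof is precisely ``As in \cite{Li1}, by Theorem \ref{controllability} we get''). Your explicit bookkeeping of the $C^1$ diffeomorphism $(A,V)\leftrightarrow(r,s)$, the identification $\la_i(0)=\wt\la_i$ matching \eqref{T} with \eqref{SV-T}, and the equivalence $\|(h,\overline h)\|\to 0 \Leftrightarrow \|(f,\overline f)\|\to 0$ simply spells out what the paper leaves implicit.
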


As in \cite{GuLi}, by Theorem \ref{observability} we obtain

\begin{thm}{\bf (Exact boundary observability)}
\label{SV-observability} Let $(\wt A,\wt V)\ (\wt A>0)$ be a
constant subcritical equilibrium and  $T$ satisfy \e{SV-T}. If
$\|(A_0-\wt A,V_0-\wt V)\|_{(C^1[0,L])^2}$ and $\|(h,\overline
h)\|_{(C^1[0,T])^2}$ are sufficiently small, and the conditions of
$C^1$ compatibility  are satisfied at the points $(t,x)=(0,0)$ and
$(0,L)$, then the initial data $(A_0,V_0)$ can be uniquely
determined by the boundary observation $(\overline A(t),\overline
V(t)):= (A(t,0),V(t,0))$ together with the known boundary functions
$(h(t),\overline h(t))$. Moreover, the following observability
estimate holds: \be\label{SV-observ-estim} \|(A_0-\wt A,V_0-\wt
V)\|_{(C^1[0,L])^2} \leq C(\|(\overline A-\wt A, \overline V-\wt
V)\|_{(C^1[0,T])^2} +\|(h,\overline h)\|_{(C^1[0,T])^2}), \ee where
$C$ is a positive constant possibly depending on $T$.
\end{thm}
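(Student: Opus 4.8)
The plan is to carry everything over to the Riemann invariants $(r,s)$ of \e{sys-rs} and then invoke the general observability result, Theorem \ref{observability}. In these coordinates the governing system is the diagonal $2\times2$ system \e{SV-sys-rs} with $\la_1<0<\la_2$, so \e{eigen-non-0} holds with $n=2$ and $m=1$, and the nonlocal boundary conditions \e{SV-BC-rs-1}--\e{SV-BC-rs-2} are exactly of the admissible form, with $G_2=F$, $G_1=\overline F$, $H_2=f$, $H_1=\overline f$, the normalization \e{G_r-G_s} being guaranteed by $F(t,0,0)\equiv\overline F(t,0,0)\equiv0$. Since \e{subcritical} together with $H'>0$ makes the change of variables $(A,V)\mapsto(r,s)$ a $C^1$ diffeomorphism near the equilibrium (its Jacobian is $-\frac12 G'(A)\neq0$), the observation $(\overline A,\overline V)=(A(t,0),V(t,0))$ is $C^1$-equivalent to $(r(t,0),s(t,0))$ and the initial data $(A_0,V_0)$ is $C^1$-equivalent to $(r_0,s_0)$; it therefore suffices to prove the observability estimate in the $(r,s)$ variables.

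The essential difficulty is that Theorem \ref{observability} requires the \emph{two-sided} boundary data $v_1(t,0)=r(t,0)$ and $v_2(t,L)=s(t,L)$, whereas the observation only supplies the \emph{one-sided} data $r(t,0)$ and $s(t,0)$ at $x=0$. First I would recover the missing right-end value $s(t,L)$ by inverting the boundary relation \e{SV-BC-rs-1}: since $r(t,0)$ is observed and $f(t)$ is a known $C^1$ function of the prescribed $h,\overline h$, the identity $s(t,0)-f(t)=F(t,r(t,0),s(t,L))$ can be solved for $s(t,L)$ as soon as $\pd F{s_2}\neq0$ at the equilibrium, where $s_2=s(t,L)$. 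Implicit differentiation of $P_1=P_2=0$ (with $s_1=s(t,0)$ and $r_2=r(t,L)$ the dependent variables) gives
\be
\pd F{s_2}=\pd{s_1}{s_2}=-\f{\det\big(\pa(P_1,P_2)/\pa(s_2,r_2)\big)}{\det\big(\pa(P_1,P_2)/\pa(s_1,r_2)\big)}.
\ee
The denominator is precisely the nonzero Jacobian already computed in the text, and the same kind of short computation yields, at $(A,V)=(\wt A,\wt V)$,
\be
\det\big(\pa(P_1,P_2)/\pa(s_2,r_2)\big)=-2\sqrt{\f{\wt A}{gH'(\wt A)}}\,(\wt V^2-g\wt AH'(\wt A))>0
\ee
by subcriticality \e{subcritical}; as this is exactly the negative of the denominator, $\pd F{s_2}\neq0$ (indeed $\pd F{s_2}=1$). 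The Implicit Function Theorem then produces a $C^1$ function $\Psi$ with $s(t,L)=\Psi(t,r(t,0),s(t,0),f(t))$, and hence
\be
\|s(\cdot,L)\|_{C^1[0,T]}\leq C\big(\|r(\cdot,0)\|_{C^1[0,T]}+\|s(\cdot,0)\|_{C^1[0,T]}+\|f\|_{C^1[0,T]}\big).
\ee

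With $r(t,0)$ observed directly and $s(t,L)$ now reconstructed, I would apply Theorem \ref{observability}, whose remaining hypotheses are all in force: every datum is $C^1$; the smallness and $C^1$-compatibility assumptions are inherited; $T$ obeys \e{SV-T}, which is exactly the specialization of \e{T} since $\la_i(0)=\wt\la_i$; and $\pd Ft,\pd{\overline F}t$ are (here trivially) locally Lipschitz in $(r,s)$ because the sole $t$-dependence of the boundary relations resides in $f,\overline f$. Theorem \ref{observability} then determines $(r_0,s_0)$ uniquely and furnishes
\be
\|(r_0,s_0)\|_{C^1[0,L]}\leq C\big(\|r(\cdot,0)\|_{C^1[0,T]}+\|s(\cdot,L)\|_{C^1[0,T]}+\|f\|_{C^1[0,T]}+\|\overline f\|_{C^1[0,T]}\big).
\ee
Substituting the preceding bound for $\|s(\cdot,L)\|_{C^1}$, using the $C^1$-equivalences $(r_0,s_0)\leftrightarrow(A_0-\wt A,V_0-\wt V)$ and $(r(\cdot,0),s(\cdot,0))\leftrightarrow(\overline A-\wt A,\overline V-\wt V)$ together with the linear $C^1$ control of $(f,\overline f)$ by $(h,\overline h)$, and collecting constants, yields exactly \e{SV-observ-estim}.

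The main obstacle is the key step above: verifying that the nonlocal boundary condition can be inverted to express the right-end invariant $s(t,L)$ through the single-sided observation at $x=0$, i.e.\ that $\det\big(\pa(P_1,P_2)/\pa(s_2,r_2)\big)\neq0$ at the subcritical equilibrium. Once this is secured, the remainder is simply the tracking of $C^1$ norms through $C^1$-equivalent changes of variables and a direct appeal to Theorem \ref{observability}.
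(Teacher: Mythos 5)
Your proposal is correct and follows essentially the paper's own route: the paper obtains this theorem by applying Theorem \ref{observability} to the diagonalized system \eqref{SV-sys-rs} with the nonlocal boundary conditions \eqref{SV-BC-rs-1}--\eqref{SV-BC-rs-2}, the key point being precisely the one you isolate---that the missing right-end datum $s(t,L)$ (equivalently $(A(t,L),V(t,L))$) can be uniquely recovered from the observation at $x=0$ together with the boundary conditions, which is the mechanism spelled out in the remark following the theorem. Your implicit-function-theorem verification that $\pd{F}{s_2}=1\neq 0$ at the subcritical equilibrium is a correct, explicit implementation of that recovery step, so the two arguments coincide in substance.
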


\begin{rem} Theorem \ref{SV-observability} still
holds if we take the boundary observations $(\overline
A(t),\overline V(t)):=(A(t,L),V(t,L))$ instead of $(A(t,0),V(t,0))$.
In fact, the  exact boundary observability can be realized as long
as the values $(A(t,0),V(t,0),A(t,L),V(t,L))$ or
$(r(t,0),s(t,0),r(t,L),s(t,L))$  can be uniquely determined from the
boundary observations together with boundary conditions
\eqref{BC-S}-\eqref{BC-Q}. For instance, if the boundary
observations are taken as $(\overline S(t),\overline
Q(t))=(S(t,0),Q(t,0))$ (or $(S(t,L),Q(t,L))$), the exact boundary
observability can be also realized with the following observability
estimate:
 \be \|(A_0-\wt A,V_0-\wt V)\|_{(C^1[0,L])^2} \leq
C(\|(\overline S-\wt S, \overline Q-\wt Q)\|_{(C^1[0,T])^2}
+\|(h,\overline h)\|_{(C^1[0,T])^2}), \ee
 where
 \be
\wt S=\f 12 \wt V^2+gH(\wt A),\quad  \wt Q=\wt A \wt V. \ee
\end{rem}

\begin{rem}
If the energy type boundary condition \eqref{BC-S} is replaced by
the  water level boundary condition
 \be H(A(t,0))-H(A(t,L))=h(t),\ee
 Theorems \ref{SV-semiglobal}-\ref{SV-observability} still hold.
\end{rem}

\section{Application 2---1-D quasilinear wave equation}

Consider the following 1-D quasilinear wave equation \be\label{wave}
u_{tt}-(K(u,u_x))_x=F(u,u_x,u_t), \ee where $K$ is a  $C^2$ function
with
 \be\label{K_v} K_v(u,v)>0 \ee
  and $F$ is a $C^1$ function with
 \be\label{F-0} F(0,0,0)=0. \ee

By \eqref{F-0}, $u\equiv 0$ is an equilibrium of \eqref{wave}. All
the discussions in this section will  be in a $C^1$ neighbourhood of
$(u,u_x,u_t)=(0,0,0)$.

Let the initial condition be
 \be\label{wave-initial}
u(0,x)=\vph(x),\ u_t(0,x)=\psi(x),\quad x\in [0,L] \ee
 and the boundary conditions take the following nonlocal form:
\begin{align}\label{Dirichlet}
&u(t,0)-u(t,L)=h(t),\\\label{Neumann}
&u_x(t,0)-u_x(t,L)=\overline h(t).
\end{align}
In particular, if $(h(t),\overline h(t))\equiv(0,0)$,
\eqref{Dirichlet}-\eqref{Neumann} become the usual periodic boundary
conditions.

Reducing the mixed problem \eqref{wave} and
\eqref{wave-initial}-\eqref{Neumann} to a quasilinear hyperbolic
system with boundary conditions in the form of
\eqref{BC-v_r-0}-\eqref{BC-v_p-L}, we will establish the theory of
the semiglobal $C^2$ solution and then the local exact boundary
controllability and observability.

Let \be v=u_x,\ w=u_t \ee and \be U=(u,v,w)^T. \ee \eqref{wave} can
be rewritten to the following first order quasilinear hyperbolic
system
 \be\label{wave-sys}
\begin{cases}
u_t=w,\\ v_t-w_x=0,\\w_t-K_v(u,v)\,v_x=\wt F(u,v,w):= F(u,v,w)+K_u(u,v)v
\end{cases}
\ee
 with
  \be \wt F(0,0,0)=0. \ee
   By \eqref{K_v}, \eqref{wave-sys} is a strictly hyperbolic system
with three distinct real eigenvalues
 \be
\lambda_1(U)=-\sqrt{K_v(u,v)}<\lambda_2(U)\equiv
0<\lambda_3(U)=\sqrt{K_v(u,v)} \ee
 and a complete set of left eigenvectors
  \be l_1(U)=(0,\sqrt{K_v(u,v)},1),\  l_2(U)=(1,0,0),\
l_3(U)=(0,-\sqrt{K_v(u,v)},1). \ee

The initial condition correspondingly becomes
\be\label{wave-initial-2} U(0,x)=(\vph(x),\vph'(x),\psi(x))^T\quad
x\in [0,L]. \ee

Let
 \be V_i=l_i(U)U\quad(i=1,2,3), \ee
 i.e., \be
V_1=\sqrt{K_v(u,v)}\,v+w,\  V_2=u,\  V_3=-\sqrt{K_v(u,v)}\,v+w. \ee
At the point $U=0$, we have
 \be \frac{\partial(V_1,V_2,V_3)}{\partial(u,v,w)} =\left(
   \begin{array}{ccc}
     0 & \sqrt{K_v(0,0)} & 1 \\
     1 & 0 & 0 \\
     0 & -\sqrt{K_v(0,0)}& 1 \\
   \end{array}
 \right),
\ee then
 \be\label{jacobi}
\frac{\partial(u,v,w)}{\partial(V_1,V_2,V_3)} =\left(
   \begin{array}{ccc}
     0 & 1 & 0\\
     \f 1{2\sqrt{K_v(0,0)}} & 0 & -\f 1{2\sqrt{K_v(0,0)}} \\
     \f 12 & 0& \f 12 \\
   \end{array}
 \right).
\ee

Noting the condition of $C^0$ compatibility  at the points
$(t,x)=(0,0)$ and $(0,L)$:
 \be \vph(0)-\vph(L)=h(0), \ee
  the boundary condition \eqref{Dirichlet} is equivalent to
\be\label{Dirichlet-2} w(t,0)-w(t,L)=h'(t). \ee

In order to reduce \eqref{Neumann} and \eqref{Dirichlet-2} into the
form of \eqref{BC-v_r-0}-\eqref{BC-v_p-L}, we first rewrite them to
\begin{align}
&P_1:= w(t,0)-w(t,L)-h'(t)=0,\\
&P_2:= v(t,0)-v(t,L)-\overline h(t)=0.
\end{align}
Let \be w_1=V_3(t,0),\ w_2=V_1(t,L). \ee At the point of $U=0$, by
\eqref{jacobi} it is easy to see that
  \be \det\left|\frac{\partial(P_1,P_2)}{\partial(w_1,w_2)}\right| =-\f
{1}{2\sqrt{K_v(0,0)}}<0, \ee
 then, in a $C^0$ neighbourhood of $U=0$, \eqref{Neumann} and
\eqref{Dirichlet-2} can be equivalently rewritten as
\begin{align}\label{wave-BC-0}
&V_3(t,0)=G_3(t,V_1(t,0),V_2(t,0),V_2(t,L),V_3(t,L))+H_3(t),\\\label{wave-BC-L}
&V_1(t,L)=G_1(t,V_1(t,0),V_2(t,0),V_2(t,L),V_3(t,L))+H_1(t),
\end{align}
where $G_1,G_3$ are $C^1$ functions with respect to their arguments
and  satisfy \be G_1(t,0,0,0,0)\equiv G_3(t,0,0,0,0)\equiv 0, \ee
consequently, \be \|(h',\overline h)\|_{(C^1[0,T])^2} \rightarrow 0
\Longleftrightarrow \|(H_1,H_3)\|_{(C^1[0,T])^2} \rightarrow 0. \ee

As in \cite{LiYu}(or \cite{Wang2}), applying Theorem
\ref{semiglobal} to the mixed problem \eqref{wave-sys},
\eqref{wave-initial-2} and \eqref{wave-BC-0}-\eqref{wave-BC-L}, we
obtain

\begin{thm}{\bf (Semiglobal $C^2$ solution)}\label{wave-semiglobal}
For any preassigned and possibly quite large $T>0$, if
$\|(\vph,\psi)\|_{C^2[0,L]\times C^1[0,L]}$ and $\|(h,\overline
h)\|_{C^2[0,T] \times C^1[0,T]}$ are sufficiently small (possibly
depending on $T$), and the conditions of $C^2$ compatibility  are
satisfied at the points $(t,x)=(0,0)$ and $(0,L)$, then the mixed
problem \eqref{wave} and \eqref{wave-initial}-\eqref{Neumann} admits
a unique semiglobal solution $C^2$ solution $u=u(t,x)$ with small
$C^2$ norm on the domain $R(T)=\{(t,x)|0\leq t\leq T, 0\leq x\leq
L\}$ and the following estimate holds: \be \|u\|_{C^2[R(T)]}\leq
C(\|(\vph,\psi)\|_{C^2[0,L]\times C^1[0,L]} +\|(h,\overline
h)\|_{C^2[0,T]\times C^1[0,T]}), \ee where $C$ is a positive
constant possibly depending on $T$.
\end{thm}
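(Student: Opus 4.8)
The plan is to reduce the $C^2$ well-posedness of the quasilinear wave equation to the $C^1$ semiglobal theory already established in Theorem~\ref{semiglobal}. The reduction to the first order system \eqref{wave-sys} via $v=u_x$, $w=u_t$ has already been carried out in the excerpt, along with the computation of the eigenvalues, left eigenvectors, the diagonalized variables $V_i=l_i(U)U$, and the passage of the nonlocal boundary conditions \eqref{Neumann} and \eqref{Dirichlet-2} into the admissible form \eqref{wave-BC-0}--\eqref{wave-BC-L}. Thus the essential content of the proof is (i) to verify that the hypotheses of Theorem~\ref{semiglobal} are met by the system \eqref{wave-sys} with data \eqref{wave-initial-2} and boundary conditions \eqref{wave-BC-0}--\eqref{wave-BC-L}, so as to produce a semiglobal $C^1$ solution $U=(u,v,w)^T$ on $R(T)$, and (ii) to upgrade the regularity back to a $C^2$ solution $u$ of the original equation \eqref{wave}, matching the $C^2$ compatibility data.

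First I would check the structural hypotheses of Theorem~\ref{semiglobal} for \eqref{wave-sys}. The system is strictly hyperbolic with a single zero eigenvalue $\lambda_2\equiv 0$, so in the notation of \eqref{eigenvalue} we are in the case $l=1$, $m=2$, $n=3$; the left eigenvectors $l_i$, eigenvalues $\lambda_i$ and the source terms $f_i=l_i(U)\wt F$-type expressions are $C^1$, with $f_i(0)=0$ following from $\wt F(0,0,0)=0$. The boundary functions $G_1,G_3$ are $C^1$ and normalized by $G_1(t,0,0,0,0)\equiv G_3(t,0,0,0,0)\equiv 0$, and the $C^1$-norms of $H_1,H_3$ are controlled by $\|(h',\overline h)\|_{(C^1[0,T])^2}$, hence by the prescribed smallness of $\|(h,\overline h)\|_{C^2[0,T]\times C^1[0,T]}$. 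The initial datum $U(0,x)=(\vph,\vph',\psi)^T$ lies in $C^1[0,L]$ with norm controlled by $\|(\vph,\psi)\|_{C^2[0,L]\times C^1[0,L]}$. Granting that the $C^2$ compatibility conditions for the wave equation translate exactly into the $C^1$ compatibility conditions for \eqref{wave-sys} at $(0,0)$ and $(0,L)$, Theorem~\ref{semiglobal} directly yields a unique semiglobal $C^1$ solution $U$ on $R(T)$ with $\|U\|_{C^1[R(T)]}\le C(\|(\vph,\vph',\psi)\|_{C^1[0,L]}+\|(H_1,H_3)\|_{(C^1[0,T])^2})$, which I would then rewrite in terms of the original data norms.

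The step I expect to be the main obstacle is the regularity upgrade from the $C^1$ solution $U=(u,v,w)^T$ of the first order system to a genuine $C^2$ solution $u$ of \eqref{wave}. The point is that $U\in C^1$ only gives $u\in C^1$ a priori, whereas we must show $u\in C^2$, i.e.\ that $v=u_x$ and $w=u_t$ are themselves $C^1$ and that the compatibility relation $v=u_x$ holds identically (not merely at $t=0$). I would argue as follows: the first equation $u_t=w$ together with $u(0,x)=\vph(x)$ shows $u\in C^1$ with $u_t=w\in C^1$; differentiating and using the second equation $v_t=w_x$ one checks that $(u_x-v)_t=u_{xt}-v_t=w_x-w_x=0$, so $u_x-v$ is independent of $t$ and equals its initial value $\vph'-\vph'=0$, giving $u_x\equiv v\in C^1$. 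Hence both $u_t$ and $u_x$ are $C^1$, so $u\in C^2$, and substituting back into \eqref{wave-sys} recovers \eqref{wave}. This argument, which is standard in the reduction between the quasilinear wave equation and its first order form (as in \cite{LiYu} or \cite{Wang2}), is where the genuine care is needed, since it relies on the precise compatibility of the diagonalized variables $V_i$ with the physical variables through \eqref{jacobi}.

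Finally, I would assemble the $C^2$ estimate. The bound $\|U\|_{C^1[R(T)]}\le C(\cdots)$ from Theorem~\ref{semiglobal} controls $u$, $u_x=v$, $u_t=w$ and their first derivatives in $C^1$; since $u\in C^2$ with second derivatives expressible through the first derivatives of $v$ and $w$ (via $u_{tt}=w_t$, $u_{xx}=v_x$, $u_{tx}=w_x=v_t$), the $C^1[R(T)]$ control of $U$ upgrades to a $C^2[R(T)]$ control of $u$. Re-expressing the right-hand side in the data norms $\|(\vph,\psi)\|_{C^2[0,L]\times C^1[0,L]}$ and $\|(h,\overline h)\|_{C^2[0,T]\times C^1[0,T]}$ yields the stated estimate $\|u\|_{C^2[R(T)]}\le C(\|(\vph,\psi)\|_{C^2[0,L]\times C^1[0,L]}+\|(h,\overline h)\|_{C^2[0,T]\times C^1[0,T]})$, completing the proof.
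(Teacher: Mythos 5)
Your proposal is correct and follows essentially the same route as the paper: the paper's entire proof is the reduction already carried out in the text (first-order system \eqref{wave-sys}, diagonalized variables, boundary conditions \eqref{wave-BC-0}--\eqref{wave-BC-L}) followed by a direct application of Theorem \ref{semiglobal}, exactly as you do, citing \cite{LiYu} and \cite{Wang2} for the pattern. Your additional details — identifying the case $l=1$, $m=2$, $n=3$ with the zero eigenvalue, and the regularity upgrade via $(u_x-v)_t=0$ to recover $u_x\equiv v$ and hence $u\in C^2$ — are precisely the standard steps the paper leaves implicit.
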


Based on Theorem \ref{wave-semiglobal}, adopting a similar
constructive method as in \cite{LiYu} (or \cite{Wang2}), we obtain
immediately

\begin{thm}{\bf (Exact boundary controllability)}
\label{wave-controllability} Let
 \be\label{wave-T}T>\f{L}{\sqrt{K_v(0,0)}}. \ee
  For any given initial data
$(\vph,\psi)$ and final data $(\Phi,\Psi)$, if the norms
$\|(\vph,\psi)\|_{C^2[0,L]\times C^1[0,L]}$ and
$\|(\Phi,\Psi)\|_{C^2[0,L] \times C^1[0,L]}$ are sufficiently small,
then there exist boundary controls $(h(t),\overline h(t))$ with
small  $\|(h,\overline h)\|_{C^2[0,T]\times C^1[0,T]}$, such that
the mixed problem \eqref{wave} and
\eqref{wave-initial}-\eqref{Neumann}admits a unique $C^2$ solution
$u=u(t,x)$ with small $C^2$ norm on $R(T)$, which satisfies exactly
the final condition \be u(T,x)=\Phi(x), u_t(T,x)=\Psi(x),\quad x\in
[0,L]. \ee
\end{thm}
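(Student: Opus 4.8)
The plan is to exploit that the boundary functions $(h,\overline h)$ in \eqref{Dirichlet}--\eqref{Neumann} are at our disposal. Once a $C^2$ solution $u=u(t,x)$ of \eqref{wave} on $R(T)$ is produced that matches both the initial data $u(0,\cdot)=\vph,\ u_t(0,\cdot)=\psi$ and the final data $u(T,\cdot)=\Phi,\ u_t(T,\cdot)=\Psi$, it suffices to \emph{define} $h(t):=u(t,0)-u(t,L)$ and $\overline h(t):=u_x(t,0)-u_x(t,L)$; these automatically realize the nonlocal boundary conditions, and via the reduction \eqref{wave-sys} the equivalent first order system and its boundary conditions \eqref{wave-BC-0}--\eqref{wave-BC-L} hold as well. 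Thus the boundary relations impose no constraint, and the problem reduces to the free construction of such a $u$.

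To build $u$ I would first manufacture data near the two time-ends. Applying Theorem \ref{wave-semiglobal} with initial data $(\vph,\psi)$ (and any fixed small compatible controls) yields a forward semiglobal $C^2$ solution with small norm; applying it after the time reversal $\wt u(t,x)=u(T-t,x)$, which solves a wave equation of the same type \eqref{wave} with $F(u,u_x,u_t)$ replaced by $F(u,u_x,-u_t)$, with initial data $(\Phi,-\Psi)$, produces a backward solution meeting $u(T,\cdot)=\Phi,\ u_t(T,\cdot)=\Psi$. The two are then patched along the two characteristic families, following the constructive method of \cite{LiYu} (or \cite{Wang2}). The guiding picture is the linear d'Alembert decomposition $u=p(x+\sqrt{K_v(0,0)}\,t)+q(x-\sqrt{K_v(0,0)}\,t)$: the initial data fix $p$ and $q$ on $[0,L]$, the final data fix them on two translated intervals, and precisely the hypothesis \eqref{wave-T}, namely $T>L/\sqrt{K_v(0,0)}$, forces these prescribed intervals to be disjoint, leaving a gap on which $p$ and $q$ may be interpolated $C^2$-smoothly with small norm. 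In the quasilinear case the same separation of the $\lambda_3=+\sqrt{K_v}$ and $\lambda_1=-\sqrt{K_v}$ families lets one solve the intermediate Goursat type problems and glue the forward piece near $t=0$ to the backward piece near $t=T$ into a single small $C^2$ solution on $R(T)$.

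The step I expect to be the main obstacle is the zero eigenvalue $\lambda_2\equiv 0$: because of it the hypothesis \eqref{eigen-non-0} fails, so Theorem \ref{controllability} cannot be invoked directly, and the naive control time $L\max_i 1/|\lambda_i(0)|$ in \eqref{T} is even infinite. The zero-speed diagonalized variable is exactly $V_2=u$, transported along the vertical characteristics by $u_t=w$, so matching both $u(0,\cdot)=\vph$ and $u(T,\cdot)=\Phi$ forces the compatibility $\Phi(x)-\vph(x)=\int_0^T w(t,x)\,dt$ on every vertical line. The resolution I would adopt is to carry out the entire construction at the level of the single potential $u$, rather than treating $u$, $v=u_x$, $w=u_t$ as independent diagonalized unknowns: then $v=u_x$, $w=u_t$ and the integral relation above hold automatically, exactly as in the d'Alembert formula where $\int_0^T u_t\,dt=u(T,\cdot)-u(0,\cdot)$ is an identity, so no extra constraint survives. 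This is precisely why the finite time \eqref{wave-T} suffices in spite of the zero eigenvalue.

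Finally, the constructed $u$ is $C^2$ with small norm, solves \eqref{wave}, and meets both data sets, so the induced controls $(h,\overline h)$ inherit small $C^2[0,T]\times C^1[0,T]$ norms; uniqueness of the resulting mixed problem and the corresponding norm bounds then follow from Theorem \ref{wave-semiglobal}.
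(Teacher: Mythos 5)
Your proposal takes essentially the same route as the paper: the paper's entire proof consists of combining the semiglobal existence theorem (Theorem \ref{wave-semiglobal}) with the constructive method of \cite{LiYu} (or \cite{Wang2}) --- forward and backward semiglobal solutions, a patching construction made possible by $T>L/\sqrt{K_v(0,0)}$, and controls obtained afterwards by reading off the boundary traces --- which is exactly what you describe. Your supplementary observations, namely that the nonlocal conditions \eqref{Dirichlet}--\eqref{Neumann} impose no constraint because $h(t):=u(t,0)-u(t,L)$ and $\overline h(t):=u_x(t,0)-u_x(t,L)$ can simply be defined from any constructed solution, and that the zero eigenvalue $\lambda_2\equiv 0$ of \eqref{wave-sys} is harmless because the construction is carried out at the level of the potential $u$ itself, are precisely the (implicit) points that make the cited constructive method applicable here.
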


By the constructive method in \cite{Li2} (or \cite{GuoWang}), we get

\begin{thm}{\bf (Exact boundary observability)}\label{wave-observability}
Let $T$ satisfy \eqref{wave-T}. If $\|(\vph,\psi)\|_{C^2[0,L]\times
C^1[0,L]}$ and $\|(h,\overline h)\|_{C^2[0,T]\times C^1[0,T]}$ are
sufficiently small, and the conditions of  $C^2$ compatibility  are
satisfied at the points $(t,x)=(0,0)$ and $(0,L)$, then the initial
data $(\vph,\psi)$ can be uniquely determined by the boundary
observations $(\overline u(t),\overline v(t)):= (u(t,0),u_x(t,0))$
together with the boundary functions $(h(t),\overline h(t))$.
Moreover, the following observability estimate holds:
\be\label{wave-observ-estim} \|(\vph,\psi)\|_{C^2[0,L]\times
C^1[0,L]}\leq C (\|(\overline u,\overline v)\|_{C^2[0,T]\times
C^1[0,T]} +\|(h,\overline h)\|_{C^2[0,T]\times C^1[0,T]}), \ee where
$C$ is a positive constant possibly depending on $T$.
\end{thm}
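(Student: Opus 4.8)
The plan is to invert the observation map by a characteristic reconstruction that exploits the very explicit structure of the nonlocal boundary conditions, handling the zero eigenvalue $\la_2\equiv 0$ separately. Write $c=\sqrt{K_v(0,0)}$, so that by smallness $\la_3=\sqrt{K_v(u,v)}=-\la_1$ stays close to $c$ and \eqref{wave-T} reads $cT>L$; recall from the diagonalization that $w=\frac12(V_1+V_3)$ and $\sqrt{K_v}\,v=\frac12(V_1-V_3)$. By Theorem \ref{wave-semiglobal} the forward mixed problem \eqref{wave} and \eqref{wave-initial}--\eqref{Neumann} has a unique $C^2$ solution on $R(T)$ generating the observation $(\overline u,\overline v)=(u(\cdot,0),u_x(\cdot,0))$; the task is to reconstruct $(\vph,\psi)$ from $(\overline u,\overline v)$ and the known $(h,\overline h)$, with the estimate.

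First I would record the full state on both lateral boundaries. At $x=0$ this is immediate: $u(t,0)=\overline u(t)$, $u_x(t,0)=\overline v(t)$ and $u_t(t,0)=\overline u'(t)$ (here $\overline u\in C^2$ is used). The point is that the nonlocal conditions \eqref{Dirichlet}--\eqref{Neumann} transfer this at once to $x=L$: $u(t,L)=\overline u(t)-h(t)$, $u_x(t,L)=\overline v(t)-\overline h(t)$, and, differentiating \eqref{Dirichlet} (legitimate by the $C^2$ compatibility, cf. \eqref{Dirichlet-2}), $u_t(t,L)=\overline u'(t)-h'(t)$. Thus $V_1,V_2,V_3$ are known on both $x=0$ and $x=L$ for $t\in[0,T]$, with $C^1$ norms controlled by $\|\overline u\|_{C^2[0,T]}+\|\overline v\|_{C^1[0,T]}+\|h\|_{C^2[0,T]}+\|\overline h\|_{C^1[0,T]}$.

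Next I would reconstruct $U=(u,v,w)^T$ in the lower region by integrating the characteristic form \eqref{sys}. The invariants $V_1,V_3$ propagate with nonzero speeds: for each $x_0\in[0,L]$ the characteristic of $\la_1$ through $(x_0/c,0)$ and the characteristic of $\la_3$ through $((L-x_0)/c,L)$ both reach $(0,x_0)$, and since $cT>L$ the source points $x_0/c$ and $(L-x_0)/c$ lie in $[0,T]$ for every $x_0\in[0,L]$, so both boundary data sets are available. Integrating the coupled quasilinear characteristic ODEs for $V_1,V_3$ along these characteristics — a standard small-data, Gronwall-controlled fixed point over the triangle they sweep out, exactly the machinery underlying Theorem \ref{wave-semiglobal} — determines $V_1,V_3$, hence $v=\frac{1}{2\sqrt{K_v}}(V_1-V_3)$ and $w=\frac12(V_1+V_3)$, on a domain containing the initial segment $\{t=0,\ 0\le x\le L\}$. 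The zero-speed variable $V_2=u$ is then recovered without any transport, by spatial integration of its defining relation $u_x=v$ from the observed left value: $u(0,x)=\overline u(0)+\int_0^x v(0,\xi)\,d\xi$.

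Finally I would read off $\psi(x)=u_t(0,x)=\frac12\bigl(V_1(0,x)+V_3(0,x)\bigr)$ and $\vph(x)=\overline u(0)+\int_0^x \frac{1}{2\sqrt{K_v}}\bigl(V_1-V_3\bigr)(0,\xi)\,d\xi$, noting $\overline u(0)=u(0,0)=\vph(0)$; since the reconstruction preserves $C^1$ regularity one gets $\psi\in C^1$ and $\vph\in C^2$, and \eqref{wave-observ-estim} follows by composing the Lipschitz, $T$-dependent bounds of the boundary differentiations, the characteristic integrations and the single spatial integration, uniqueness being automatic as each step is forced. I expect the main obstacle to be precisely the zero eigenvalue $\la_2\equiv0$: the component $u$ carries no horizontal transport, so it cannot be recovered on the initial line by tracing a characteristic back to the observation, which is why the argument must first reconstruct the genuinely hyperbolic invariants $V_1,V_3$ from \emph{both} ends and only then recover $u$ by integrating $u_x=v$ in space. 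This is exactly what forces the use of the right-boundary data — supplied for free by the nonlocal conditions \eqref{Dirichlet}--\eqref{Neumann} — together with the sharp condition $cT>L$ of \eqref{wave-T}, which is what makes the two characteristic fans from $x=0$ and $x=L$ jointly cover the whole initial segment.
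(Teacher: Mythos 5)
Your first step---recovering the full trace $(u,u_x,u_t)$ on \emph{both} lateral boundaries from the one-sided observation via the nonlocal conditions \eqref{Dirichlet}--\eqref{Neumann}---is exactly how the paper's argument starts, and your use of \eqref{wave-T} is the right one. But the core of your reconstruction (step 2) has a genuine gap: in the quasilinear, coupled case the quantities $V_1,V_3$ are \emph{not} transported autonomously along their characteristics. The directional derivative of $V_1$ along a $\la_1$-characteristic involves the full state $U=(u,v,w)$ at the interior points of that characteristic (through $\wt F$, $K_u$, $K_v$, and through the dependence of the characteristic curve itself on $U$), and those interior points lie precisely in the lower region where nothing is known yet; symmetrically for $V_3$. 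So integrating ``from the boundary down to $(0,x_0)$'' is circular. If you instead try to close the system as integral equations---$V_1$ integrated back from its \emph{future} exit on $\{x=0\}$, $V_3$ from its future exit on $\{x=L\}$, and $u$ by horizontal integration---the system does not close on any bounded region: the equation for $V_1$ at a point needs $U$ along a path going up-left, whose points need $U$ along paths going up-right, and each such reflection pushes the dependence roughly $L/\sqrt{K_v(0,0)}$ later in time, so with only $T>L/\sqrt{K_v(0,0)}$ the cascade exits through the top $t=T$, where there is no data. Equivalently, the lower region $\{t<\min(x,L-x)/\sqrt{K_v}\}$ is \emph{not} a determinate domain for data on the two lateral boundaries, so no Gronwall or fixed-point argument ``over the triangle they sweep out'' can be run there, and the claim that ``uniqueness is automatic as each step is forced'' is unjustified. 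Your picture is correct only in the linear, decoupled case ($K_v$ constant, $F\equiv 0$), where it reduces to d'Alembert; the theorem is quasilinear.

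The proof the paper invokes (the constructive method of \cite{Li2}, \cite{GuoWang}, cf.\ also \cite{Li3}) avoids this circularity by never tracing $t$-characteristics toward the initial line. Instead: (i) obtain both boundary traces, as you do; (ii) exchange the roles of $t$ and $x$ and solve two \emph{sideways} problems, one rightward with the data on $\{x=0\}\times[0,T]$ and one leftward with the data on $\{x=L\}\times[0,T]$---these are well-posed Cauchy problems in the $x$-direction, since the sideways eigenvalues are $\pm 1/\sqrt{K_v}$ and $0$, all finite (this is also where the zero $t$-eigenvalue is handled for free), and their determinate domains are genuine triangles opening from the data segments, inside which every backward sideways characteristic returns to the data; (iii) since $T>L/\sqrt{K_v(0,0)}$, the union of these two triangles contains the whole segment $\{t=T/2\}\times[0,L]$, so the full state is known at $t=T/2$; (iv) solve the mixed problem backward in time from $t=T/2$ with the known nonlocal boundary conditions \eqref{Dirichlet}--\eqref{Neumann} (Theorem \ref{wave-semiglobal} with $t$ reversed), which yields the reconstruction of $(\vph,\psi)$ at $t=0$, its uniqueness, and, composing the estimates of steps (ii) and (iv), the observability estimate \eqref{wave-observ-estim}. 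If you want to repair your argument, replacing your step 2 by this sideways-then-backward scheme is the standard route.
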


\begin{rem}
If the boundary observations $(\overline u(t),\overline v(t))$ are
taken as  $(u(t,0),u_x(t,L))$ or $(u(t,L),u_x(t,L))$ or
$(u(t,L),u_x(t,0))$ instead of $(u(t,0),u_x(t,0))$, Theorem
\ref{wave-observability} still holds. In fact, the exact boundary
observability always holds if $(u(t,0),u_x(t,0),$ $u(t,L),u_x(t,L))$
can be uniquely determined by the boundary observations and boundary
conditions \eqref{Dirichlet}-\eqref{Neumann}.
\end{rem}

\section{Exact boundary controllability for a system
in a network with loop can not be realized generically}

%%%%%%%%%%%%%%%%%%%%%%%%%%%%%%%%%%%%%%%%%%%%%%%%%%%%%%%%%%%%

In this section we give some examples to show that, generically
speaking, the number of both boundary controls and boundary
observations can not be reduced and then the exact boundary
controllability for a hyperbolic system in a network with loop can
not be realized.

{\bf 6.1. Linearized Saint-Venant system}

For the linearized Saint-Venant system near a  constant subcritical
equilibrium $(\widetilde A, \widetilde V)\ (\widetilde A>0)$
 \be{\partial \over \partial t}\left(\begin{array}{c}
    A\\
    V
    \end{array}\right) +
\left( \begin{array}{cc}
          \widetilde V&  \widetilde A \\
          gH'(\widetilde A) &   \widetilde V
        \end{array}
      \right)
 {\partial \over \partial x}
 \left(\begin{array}{c}
    A\\
    V
    \end{array}\right)=0,\ee
we consider the following nonlocal boundary conditions (cf. (4.31)
and (4.6)):
  \be A(t, L)- A(t, 0) =0\ee
  and
  \be V(t, L)- V(t, 0) =h(t),\ee
  which correspond to a loop.

  The two eigenvalues  and the corresponding left eigenvectors are given by
 \be \lambda_1= \widetilde V - \sqrt{g\widetilde A H'(\widetilde A)}
 <0< \lambda_2= \widetilde V + \sqrt{ g\widetilde A H'(\widetilde
 A)}\ee
 and
    \be l_1 = \big(\sqrt{g\widetilde AH'(\widetilde A)}, \ -\widetilde
A\big),\quad l_2 = \big(\sqrt{g\widetilde AH'(\widetilde A)},\
\widetilde A\big),\ee
   respectively. Using the Riemann invariants
\be \left(\begin{array}{c}
    r\\
    s
    \end{array}\right)=
\left( \begin{array}{cc}
          \sqrt{g\widetilde A H'(\widetilde A)}&  -\widetilde A \\
          \sqrt{g\widetilde A H'(\widetilde A)} &   \widetilde A
        \end{array}
      \right)
 \left(\begin{array}{c}
    A\\
    V
    \end{array}\right),\ee
   system (6.1) can be rewritten into the following diagonal
   form
\be \begin{cases}\displaystyle{\partial r\over \partial t}+
\lambda_1 {\partial r\over \partial x} =0,\\
 \displaystyle{\partial s\over \partial t} +\lambda_2{\partial s\over
\partial x}=0,\end{cases}\ee
 and (6.2)-(6.3)  are equivalently transformed into the following
 boundary conditions:
\be r(t, L)- r(t, 0) =-\widetilde A h(t)\ee
    and
    \be s(t, L)- s(t, 0) =\widetilde Ah(t).\ee
  For the control problem, there are formally two controls in (6.8)-(6.9), but they
are not independent.  We will show that system (6.7)-(6.9) is not
exactly controllable by means of $h(t)$.

 Let $(r_0, s_0)$ be a constant initial data satisfying
    \be  r_0 +  s_0 >0.\ee
    It is easy to see that the conditions of $C^1$ compatibility are
    satisfied at the point $(t,x)=(0,0)$ and $(0,L)$.
  Assume that there exists a control $h\in C^1[0,T]$, such
that system (6.7)-(6.9) with the initial data $(r_0,s_0)$ admits a
unique  $C^1$ solution $(r, s)=(r(t,x),s(t,x))$ on the domain
$R(T)=\{(t,x)|\, 0\leq t\leq T,\ 0\leq x\leq L\}$, which satisfies
the final conditions
  \be r(T, x) =  s(T, x) = 0,\quad 0\leq x\leq L.\ee
  Then, integrating (6.7) on $R(T)$ yields
  \be \begin{cases} r_0 L + \lambda_1\widetilde A\int_0^T h(t)dt=0,\\
   s_0 L - \lambda_2\widetilde A\int_0^T h(t)dt=0, \end{cases}\ee
  hence
  \be \lambda_2 r_0 +\lambda_1 s_0 =0.\ee
  Specially taking
  \be (r_0, s_0)= (\alpha\lambda_2, \lambda_1),\ee
 where $\alpha$ is a positive constant such that
  \be r_0+s_0 = \alpha \lambda_2 + \lambda_1 >0 \Longleftrightarrow \alpha
> {\sqrt{g\widetilde A H'(\widetilde A)} -\widetilde
V\over\sqrt{g\widetilde A H'(\widetilde A)}+\widetilde V },\ee
  we get a contradiction
 \be \lambda_1^2 + \alpha\lambda_2^2=0.\ee

%%%%%%%%%%%%%%%%%%%%%%%%%%%%%%%%%%%%%%%%%%%%%%%%%%%%%%%

{\bf 6.2. 1-D linear wave equation}

 First  we show that the number of boundary observations in Theorem
\ref{wave-observability} can not be reduced. For this purpose,
consider the following mixed problem for the linear wave equation
with the periodic boundary conditions:
\begin{empheq}
[left=\empheqlbrace, right=]{align}\label{linear-wave}
&\phi_{tt}-\phi_{xx}=0,\\
&\phi(t,0)=\phi(t,2 \pi),\\
&\phi_x(t,0)=\phi_x(t,2\pi),\\\label{linear-wave-IC}
&\phi(0,x)=\phi_0(x),\ \phi_t(0,x)=\phi_1(x),\quad  x\in[0,2\pi].
\end{empheq}

By Theorem \ref{wave-observability}, if the boundary observations
are chosen as $(\phi(t,0),\phi_x(t,0))$ and $T\geq 2\pi$, the exact
boundary  observability for
\eqref{linear-wave}-\eqref{linear-wave-IC} holds on the time
interval $[0,T]$. However, if the boundary observation is only
$\phi(t,0)$ (resp., $\phi_x(t,0)$), the exact boundary observability
for \eqref{linear-wave}-\eqref{linear-wave-IC} can not be realized
on any time interval $[0,T]\ (T>0)$. To show this, it suffices to
find a nontrivial solution to
\eqref{linear-wave}-\eqref{linear-wave-IC}, such that the boundary
observation $\phi(t,0)$ (resp., $\phi_x(t,0)$) is identically equal
to zero, while the initial data $(\phi_0(x),\phi_1(x))$ is not
identically zero. In fact,
 \be\label{sinnt-sinnx}
\phi(t,x)=\sin{nt}\sin{nx},\quad n\in \mathbb{Z}^+ \ee
 satisfies \eqref{linear-wave}-\eqref{linear-wave-IC} with
$(\phi_0(x),\phi_1(x))\equiv (0, n\sin{nx})$ and $\phi(t,0)\equiv
0$. Therefore, observing only $\phi(t,0)$ is not sufficient to
guarantee the exact boundary observability. Similarly,
 \be\label{cosnt-cosnx} \phi(t,x)=\cos{nt}\cos{nx},\quad n\in
\mathbb{Z}^+ \ee
 satisfies \eqref{linear-wave}-\eqref{linear-wave-IC} with
$(\phi_0(x),\phi_1(x))\equiv (\cos{nx},0)$ and $\phi_x(t,0)\equiv
0$. Then, observing only  $\phi_x(t,0)$ is not sufficient to
guarantee the exact boundary observability, either.

We now show that the number of boundary controls in Theorem
\ref{wave-controllability} can not be reduced. For this purpose, we
first  suppose that there exist $T>0$ and a boundary control
$\widetilde h(t)$ such that the solution  $y=y(t,x)$ of  the
following control system
\begin{empheq}
[left=\empheqlbrace, right=]{align}\label{Neumann-control}
&y_{tt}-y_{xx}=0,\\\label{Dirichlet-0}
&y(t,0)=y(t,2 \pi),\\
&y_x(t,0)=y_x(t,2\pi)+\widetilde h(t),\\\label{Neumann-control-IC}
&y(0,x)=y_0(x),\ y_t(0,x)=y_1(x),\quad  x\in[0,2\pi]
\end{empheq}
satisfies exactly the final null condition
 \be\label{final-zero}
y(T,x)\equiv y_t(T,x)\equiv 0, \quad x\in [0,2\pi]. \ee
Multiplying the wave equation \eqref{Neumann-control} by the
solution $\phi=\phi(t,x)$ to system
\eqref{linear-wave}-\eqref{linear-wave-IC}, and then integrating on
$[0,T]\times[0,2\pi]$, we obtain
 \be \int_0^T
\int_0^{2\pi}y_{tt}(t,x)\phi(t,x) dxdt =\int_0^T
\int_0^{2\pi}y_{xx}(t,x)\phi(t,x)dxdt. \ee
By integration by parts and using
\eqref{linear-wave}-\eqref{linear-wave-IC} and
\eqref{Dirichlet-0}-\eqref{final-zero}, it follows that
\be\label{integration by part}
\int_0^{2\pi}(-y_1(x)\phi_0(x)+y_0(x)\phi_1(x)) dx =-\int_0^T
\widetilde h(t)\phi(t,2\pi) dt. \ee

In particular, taking the initial data in \eqref{Neumann-control-IC}
to be
 \be y_0(x)=\sin{nx},\ y_1(x)\equiv 0, \quad x\in [0,2\pi] \ee
and $\phi(t,x)$ to be given by \eqref{sinnt-sinnx}, from
\eqref{integration by part} we get a contradiction  that
 \be n\int_0^{2\pi} \sin^2{nx} dx=0. \ee

\begin{rem}
Noting \eqref{Dirichlet-0}, we conclude from the above that: the
exact boundary controllability for a system in a network with loop
can not be realized generically.
\end{rem}

Similarly, it can be shown that if the initial data in
\eqref{Neumann-control-IC} is taken as
 \be y_0(x)\equiv 0, \
y_1(x)=\cos{nx}, \quad x\in [0,2\pi], \ee
 there do not exsit $T>0$ and a boundary control $h(t)$ such that
the solution $y=y(t,x)$ to the following control system
\be\label{Dirichlet-control}
\begin{cases}
y_{tt}-y_{xx}=0,\\
y(t,0)=y(t,2 \pi)+h(t),\\
y_x(t,0)=y_x(t,2\pi),\\
y(0,x)=y_0(x),\ y_t(0,x)=y_1(x),\quad  x\in[0,2\pi]
\end{cases}
\ee
 satisfies exactly the null final condition \eqref{final-zero}.

%\section*{ÖÂл}


\begin{thebibliography}{00}

%\bibitem{CoronBook}  J.-M. Coron,
%   Control and Nonlinearity,
%   Mathematical Surveys and Monographs {\bf 136}, American Mathematical Society, Providence, RI, 2007.

\bibitem{Coron}
 J.-M. Coron, G. Bastin and B. d'Andr¨¦a-Novel,
 \emph{Dissipative boundary conditions for one dimensional nonlinear hyperbolic systems},
  SIAM J. Control Optim., 47(2008), 1460-1498.

\bibitem{GuLi}Q. L. Gu, T. T. Li, \emph{Exact boundary observability of unsteady supercritical
   flows in a tree-like network of open canals}, Math. Methods Appl. Sci., 32(2008), 395-418.

\bibitem{GuoWang}L. N. Guo, Z. Q. Wang, \emph{Exact boundary observability for nonautonomous
   quasilinear hyperbolic systems}, Math. Methods Appl. Sci., 31(2008), 1956-1971.

\bibitem{Halleux}J. de Halleux, C. Prieur, J.-M. Coron, B. d'Andr\'{e}a-Novel, G. Bastin,
   \emph{Boundary feedback control in networks of open channels}, Automatica,  39(2003), 1365-1376.

\bibitem{Leugering}G. Leugering, E. G. Schmidt, \emph{On the modelling and stabilization of flows
   in networks of open canals}, SIAM J. Control Optim., 41(2002), 164-180.


\bibitem{Li1}T. T. Li, \emph{Exact boundary controllability of unsteady flows in a network of open
   canals},  Math. Nachr., 278(2005), 278-289.

\bibitem{Li2}T. T. Li, \emph{Exact boundary observability for 1-D quasilinear wave
   equations},  Math. Methods Appl. Sci., 29(2006), 1543-1553.

\bibitem{Li3}T. T. Li, \emph{Exact boundary observability for quasilinear hyperbolic
   systems},  ESAIM: COCV, 14(2008), 759-766.

\bibitem{LiJin}T. T. Li, Y. Jin,  \emph{Semi-global $C^1$ solution to the mixed
   initial-boundary value problem for quasilinear hyperbolic systems},
   Chinese Ann. Math. Ser. B,   22(2001),  325-336.

\bibitem{LiRao}T. T. Li, B. P. Rao,  \emph{Exact boundary controllability for
  quasilinear hyperbolic systems},  SIAM J. Control Optim.,  41(2003),  1748-1755.

\bibitem{LiYu}T. T. Li, L. X. Yu,  \emph{Exact boundary controllability for 1-D quasilinear
   wave equations},  SIAM J. Control Optim., 45(2006), 1074-1083.

\bibitem{LiYubook}T. T. Li, W. C. Yu, Boundary Value Problems for Quasilinear Hyperbolic Systems,
   Duke University Mathematics Series V, 1985.

\bibitem{StVenant}B. de Saint-Venant, \emph{Th\'{e}orie du mouvement
   non-permanent des eaux, avec application aux crues des
   rivi\`{e}res et \`{a} l'introduction des mar\'{e}es dans leur lit},
   C. R. Acad. Sci. Paris, 73(1871), 147-154, 237-240.

\bibitem{Wang1}Z. Q. Wang, \emph{Exact controllability for nonautonomous first order quasilinear
   hyperbolic systems},  Chinese Ann. Math. Ser. B, 27(2006), 643-656.

\bibitem{Wang2} Z. Q. Wang, \emph{Exact controllability for non-autonomous
   quasilinear wave equations}, Math. Methods Appl. Sci., 30(2007), 1311-1327.


\end{thebibliography}
\end{document}